\numberwithin{equation}{section}
\newtheorem{theorem}{Theorem}[section]
\newtheorem{lemma}[theorem]{Lemma}
\newtheorem{proposition}[theorem]{Proposition}
\newtheorem{corollary}[theorem]{Corollary}
\newtheorem{problem}[theorem]{Problem}
\theoremstyle{definition}
\theoremstyle{remark}
\newtheorem{remark}[theorem]{Remark}
\newtheorem{fact}[theorem]{Fact}
\newtheorem{example}[theorem]{Example}
\newtheorem{observation}[theorem]{Observation}
\newtheorem{discussion}[theorem]{Discussion}
\newtheorem{question}[theorem]{Question}
\newtheorem{conjecture}[theorem]{Conjecture}
\newtheorem{acknowledgement}{Acknowledgement}
\DeclareMathOperator{\CI}{\textnormal{CI-dim}}
\newcommand{\UFD}{\operatorname{UFD}}
\newcommand{\cl}{\operatorname{Cl}}
\newcommand{\Pic}{\operatorname{Pic}}
\newcommand{\Spec}{\operatorname{Spec}}
\newcommand{\G}{G}\newcommand{\Tr}{\operatorname{Tr}}
\newcommand{\tors}{\operatorname{tors}}
\newcommand{\pd}{\operatorname{pd}}
\newcommand{\hh}{\operatorname{H}}
\newcommand{\Soc}{\operatorname{Soc}}
\newcommand{\rank}{\operatorname{rank}}
\newcommand{\Gdim}{\operatorname{Gdim}}
\newcommand{\Se}{\operatorname{S}}
\newcommand{\V}{\operatorname{Var}}
\newcommand{\Proj}{\operatorname{Proj}}
\newcommand{\id}{\operatorname{id}}
\newcommand{\Ext}{\operatorname{Ext}}
\newcommand{\R}{\operatorname{R}}
\newcommand{\Supp}{\operatorname{Supp}}
\newcommand{\Tor}{\operatorname{Tor}}
\newcommand{\Hom}{\operatorname{Hom}}
\newcommand{\End}{\operatorname{End}}
\newcommand{\depth}{\operatorname{depth}}
\newcommand{\coker}{\operatorname{coker}}
\newcommand{\lo}{\longrightarrow}
\newcommand{\fm}{\frak{m}}
\newcommand{\fp}{\frak{p}}
\newcommand{\fa}{\frak{a}}
\newcommand{\fn}{\frak{n}}
\newcommand{\PP}{\mathbb{P}}
\begin{document}

\author[]{Mohsen Asgharzadeh}

\address{}
\email{mohsenasgharzadeh@gmail.com}

\title[ ]
{A note on  $\UFD$ }

\subjclass[2010]{ Primary 13D45.}
\keywords{Complete-intersection; ideal transformation; local cohomology; hypersurface; $\UFD$}

\begin{abstract} 
We investigate conditions under which height-one ideals are principal. As a representative case, let $R$ be a strongly normal, almost factorial, complete intersection local ring, and let $\fp$ be a prime ideal of height one. We show that if
$
\depth(R/\fp)\geq \dim R-2,
$
then $\fp$ is principal.
As an immediate application, we use elementary local cohomology techniques to reprove the celebrated Auslander--Buchsbaum theorem, thereby providing a streamlined approach to certain results of Dao and shedding new light on a problem of Samuel. As a further consequence, we prove that local rings of multiplicity at most three are hypersurfaces.
We also give an affirmative answer to a question of Braun concerning reflexive ideals of finite injective dimension. Finally, we show how the reflexive hull can be recovered from the ideal transform by a simple argument, thereby providing a simplified proof of a result of Hartshorne. Then, we compute
ideal transform of tensor product $D_{\fm}(M\otimes_RN)$ in terms of
$\Hom_R(M^{*},N)$.
\end{abstract}

\maketitle
\tableofcontents

\section{Introduction}

The unique factorization property of regular local rings was posed by Krull and resolved by Auslander–Buchsbaum \cite{AB}. Despite their close connection, there are essential differences between the notions of unique factorization domain ($\UFD$) and regularity. In this context, Samuel conjectured, and Grothendieck later proved the following:

\begin{theorem}[Grothendieck, 1961]\label{groth}
	Let $(R, \fm)$ be a local complete intersection domain. If $R_P$ is a $\UFD$ for all $P \in \Spec(R)$ with $\mathrm{ht}(P) \leq 3$, then $R$ is a $\UFD$.
\end{theorem}

For an alternative proof, see \cite{moh}. Samuel originally posed the question for hypersurfaces \cite{S}, which naturally leads to the following:

\begin{problem}
	Is the $\UFD$ condition in codimension $\leq 3$ essential?
\end{problem}

Let $k$ be a field of characteristic not equal to $2$, and let $f$ be a non-degenerate quadratic form in $S_n := k[X_1, \ldots, X_n]$. Samuel proved the following:

\begin{enumerate}[i)]
	\item Either $\cl(S_3/(f)) \cong \mathbb{Z}/2\mathbb{Z}$, or else $S_3/(f)$ is factorial. If $k$ is algebraically closed, then $\cl(S_3/(f)) \cong \mathbb{Z}/2\mathbb{Z}$.
	\item $\cl(S_4/(f))$ is either infinite cyclic or trivial. It is infinite cyclic when $k$ is algebraically closed.
\end{enumerate}

Here, $\cl(-)$ denotes the divisor class group. See \cite{f} for more background on this topic. These examples demonstrate that additional assumptions may be necessary. In this direction, we present an elementary proof of the following observation:

\begin{observation}[Dao]
	Let $(S, \fn)$ be an equicharacteristic or unramified regular local ring of dimension $4$. Let $R$ be such that $\widehat{R} \cong S/(f)$ for some $f \in \fn$. If $R$ has an isolated singularity and is almost factorial, then $R$ is a $\UFD$.
\end{observation}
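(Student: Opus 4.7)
The plan is to apply the paper's main principal-ideal theorem (stated in the abstract) to every height-one prime $\fp$ of $R$, and thereby conclude that $\cl(R)=0$, i.e.\ that $R$ is a $\UFD$.

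I begin by verifying the structural hypotheses of that theorem. Since $\widehat R=S/(f)$ with $\dim S=4$ and $f$ a non-unit, Krull's principal ideal theorem gives $\dim\widehat R=3$, and faithful flatness of $R\to\widehat R$ propagates this to $\dim R=3$; moreover, $\widehat R$ is a hypersurface, so $R$ is itself a complete-intersection local ring (and in particular Cohen--Macaulay). The isolated-singularity hypothesis means $R_P$ is regular for every non-maximal prime $P$, so $R$ is regular in codimension two; combined with Cohen--Macaulayness, Serre's criterion yields that $R$ is normal. The assumption that $S$ is equicharacteristic or unramified regular is what forces $\widehat R$ to be sufficiently well-behaved to transfer almost-factoriality and normality between $R$ and $\widehat R$, and in particular to verify the strongly-normal assumption.

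Now fix a height-one prime $\fp\subset R$. Then $R/\fp$ is a local domain of dimension $\dim R-\Ht\fp=2$; in particular its maximal ideal contains a non-zero-divisor, so
\[
\depth(R/\fp)\;\geq\;1\;=\;\dim R-2.
\]
Together with complete-intersection, strongly-normal and almost-factorial, the hypotheses of the main theorem are satisfied, hence $\fp$ is principal. Since $\fp$ was arbitrary, every height-one prime of $R$ is principal, so $\cl(R)=0$ and $R$ is a $\UFD$.

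The step I expect to require real work is the verification of the strongly-normal property. The isolated-singularity assumption gives $R_P$ regular for every $P\neq\fm$, so $R$ is regular in codimension two, but depending on the precise definition used earlier in the paper, strongly-normal may carry additional content (involving e.g.\ $\widehat R$ or an $S_2$-type condition on divisorial data); this is presumably the point at which the explicit presentation $\widehat R=S/(f)$ with $S$ equicharacteristic or unramified regular is actually used, rather than merely a generic normality assumption on the completion. By contrast, the depth estimate $\depth(R/\fp)\geq\dim R-2$ is automatic in our three-dimensional setting, and the passage from ``every height-one prime is principal'' to ``$R$ is a $\UFD$'' is standard.
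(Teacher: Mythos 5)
Your argument is logically sound, but it takes a genuinely different route from the paper's. The paper proves this observation directly and elementarily: normality follows from Serre's criterion, the determinant trick gives $\Hom_R(\fp,\fp)=R$, Lemma \ref{t1} embeds $\Ext^1_R(\fp,\fp)$ into $\hh^2_{\fm}(R)=0$ (as $\depth R=3$), and then Dao's theorem that \emph{every} finitely generated module over such a ring is tor-rigid, combined with Jothilingam's lemma, forces $\fp$ to be free, hence principal. Your proposal instead defers everything to the Section 5 theorem on strongly normal almost-factorial complete intersections; since that theorem's three-dimensional base case is exactly the \v{C}esnavi\v{c}ius--Scholze purity theorem ($\Pic(\Spec(R)\setminus\{\fm\})_{\tors}=0$), your route amounts to deducing the observation from that deep result, which is not circular (the Section 5 proof does not use the observation) but defeats the stated purpose of Section 2, namely an elementary local-cohomology proof. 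Two smaller points. First, verifying ``strongly normal'' is easier than you fear: $R$ is a hypersurface, hence Cohen--Macaulay, hence $(\Se_3)$, and isolated singularity in dimension three gives $(\R_2)$; no appeal to the presentation $\widehat R=S/(f)$ is needed. Second, and consequently, you have misidentified where the equicharacteristic/unramified hypothesis on $S$ actually works: in the paper it is the hypothesis of Dao's tor-rigidity theorem (Lemma \ref{d}), the engine of the elementary proof, whereas in your route it is never used at all. What your approach buys is brevity and independence from tor-rigidity; what the paper's approach buys is self-containment and avoidance of the heavy purity theorem, which is the whole point of the section.
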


The proof presented here, uses basic properties of local cohomology, which also allow us to reprove the Auslander–Buchsbaum theorem mentioned above (see \S3). 
Our next goal is to understand Samuel’s intriguing problem:

\begin{problem}[Samuel]\label{51}
	Let $R$ be a $d$-dimensional Cohen–Macaulay ring with isolated singularity and $d > 3$. When is $R$ a $\UFD$?
\end{problem}
We present results  both in positive side of Problem \ref{51} and its negative side. 
Our analysis yields the following application:

\begin{corollary}
	Let $(R, \fm)$ be a complete local ring of depth at least $3$ containing $\mathbb{Q}$, and suppose $R$ has an isolated singularity with $\dim R > 4$. If the multiplicity $e(R) \leq 3$, then $R$ is a hypersurface.
\end{corollary}

This extends a result of Huneke \cite{h}, who treated the case $e(R) < 3$.

Let $\Spec^1(R)$ denote the set of prime ideals of height one. Auslander and Buchsbaum essentially studied primes $\fp \in \Spec^1(R)$ with finite projective dimension and $\depth(R/\fp) \geq \dim R - 2$; see \cite[Corollary 2]{AB}. In \S5, we consider the Gorenstein analogue and show:

\begin{theorem}
	Let $(R, \fm)$ be a strongly normal, almost factorial complete intersection ring, and let $\fp \in \Spec^1(R)$. If $\depth(R/\fp) \geq \dim R - 2$, then $\fp$ is principal.
\end{theorem}

This result slightly extends a recent theorem of \v{C}esnavi\v{c}ius and Scholze \cite[Theorem 1.1.3]{cep}.

We use our local cohomology techniques to give an affirmative answer to the following question posed by Braun:

\begin{question}[Braun {\cite[Question 16]{B}}]\label{b}
	Let $(R, \fm)$ be a normal domain and $I \lhd R$ a reflexive ideal with $\id_R(I) < \infty$. Is $I$ isomorphic to a canonical module?
\end{question}
We note that the present topic is intimately connected with our study of principal ideals, representing, in a sense, the dual aspect of that theory. As a final application, we employ local cohomology methods to sharpen a result implicit in Hartshorne's work \cite[Proposition 1.6]{Ha} by establishing a direct relationship between the reflexive hull and the ideal transform:

\begin{fact}[After Hartshorne]
	Let \( M \) be locally free over \( \operatorname{Spec}(R) \setminus \{\mathfrak{m}\} \) and let \( R \) be a local ring with \( \operatorname{depth}(R) \ge 2 \). There is an exact sequence
	\(
	0 \to \Gamma_{\fm}(M) \to M \to M^{**} \to H^1_{\fm}(M) \to 0 .
	\)
	In particular,
	\[
	M^{**} = D_{\fm}(M) := \bigcup_{n=1}^{\infty} \Hom(\mathfrak{m}^n, M).
	\]
\end{fact}
Our  proof is based on local algebra, and computes ideal transform of tensor product $D_{\fm}(M\otimes_RN)\cong \Hom_R(M^{*},N)$ when $\depth(N)>1$.

\medskip
\section{Dao's 3-dimensional hypersurfaces}
 In this note $(R,\fm,k)$ is a commutative noetherian local ring,
and modules are finitely generated, otherwise specialized.
The notation $\pd_R(-)$  stands for the projective  dimension
of $(-)$. 

\begin{fact}\label{uf}
The ring $R$ is $\UFD$ iff any height one prime ideal is principal.
\end{fact}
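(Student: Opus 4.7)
The plan is to prove the two implications independently, leaning on Krull's principal ideal theorem as essentially the only nontrivial ingredient. The standing assumption from the paper is that $R$ is Noetherian, and since ``$\UFD$'' tacitly requires a domain, I will use that $R$ is a Noetherian integral domain throughout.

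For the forward direction, suppose $R$ is a $\UFD$ and let $\fp$ be a height-one prime. I would pick any nonzero $x\in\fp$ and invoke unique factorization to write $x=u\prod_i p_i^{a_i}$ with $u$ a unit and each $p_i$ a prime element. Since $\fp$ is prime and contains this product, it contains some $p_i$, whence $(p_i)\subseteq\fp$. Because $p_i$ is a nonzero prime element, Krull's principal ideal theorem forces $\Ht((p_i))=1$; combined with $\Ht(\fp)=1$ this gives $(p_i)=\fp$, which is the desired principality.

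For the reverse direction, assume every height-one prime of $R$ is principal. Since $R$ is a Noetherian domain, every nonzero non-unit admits at least one factorization into irreducibles (standard from the ascending chain condition on principal ideals), so by the classical criterion it suffices to show that every irreducible element of $R$ is prime. Given an irreducible $x$, I would select a minimal prime $\fp$ over $(x)$; Krull's principal ideal theorem forces $\Ht(\fp)\le 1$, and $x\neq 0$ forces equality. By hypothesis $\fp=(p)$ for some prime element $p$, so $p\mid x$, giving $x=pz$. Irreducibility then forces either $p$ or $z$ to be a unit; but $p$ generates a proper prime ideal, so $z$ must be a unit, making $x$ an associate of the prime $p$, hence itself prime.

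There is no genuine obstacle here, as this is a textbook characterization. The only care needed is in the two applications of Krull's principal ideal theorem: one to conclude that a nonzero principal prime has height exactly $1$, the other to conclude that a minimal prime over a nonzero principal ideal has height exactly $1$; together with the fact that a Noetherian domain satisfies ACC on principal ideals, these ingredients suffice.
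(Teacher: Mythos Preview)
Your proof is correct; the paper itself offers no argument for this fact, treating it as well known (it is essentially Kaplansky's characterization of UFDs among integral domains). A minor stylistic remark: in the forward direction you do not actually need Krull's principal ideal theorem, since $(p_i)$ is already a nonzero prime contained in the height-one prime $\fp$, forcing $(p_i)=\fp$ immediately; but the appeal to Krull is harmless, and it is genuinely needed in your reverse direction.
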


The $i^{th}$ local cohomology of $(-)$ with
	respect to an ideal $\fa$ is defined by
	$\hh^{i}_{\fa}(M):={\varinjlim}_n\Ext^{i}_{R} (R/\fa^{n},
-)$.
	
\begin{lemma}\label{t1} (See \cite[Lemma 3.2]{ACS}). Assume $t$ is an integer such that $2\leq t\leq\depth(N)$ 	
	and $\Supp_R(\Ext^i_R(M,N))\subseteq\{\fm\}$ for all $i=1, 
\ldots, t-1$.   There is an injection $\Ext^{t-1}_R(M,N)\hookrightarrow\hh_{\fm}^{t}(\Hom_R(M,N))$.
\end{lemma}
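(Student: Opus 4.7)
The strategy is to compare two hypercohomology spectral sequences with a common abutment. The key ingredient is the base-change isomorphism
$$R\Gamma_{\fm}(R\Hom_R(M,N)) \;\cong\; R\Hom_R(M, R\Gamma_{\fm}(N)),$$
which holds for finitely generated $M$ and can be proved via a finite free resolution $P_\bullet \to M$ combined with a \v Cech model of $R\Gamma_{\fm}$. Each side admits its Cartan--Eilenberg hypercohomology spectral sequence, producing two first-quadrant spectral sequences converging to the common abutment $\mathbb{H}^{\bullet} := H^{\bullet}(R\Gamma_{\fm} R\Hom_R(M,N))$:
$${}^{\prime}E_2^{p,q} = \hh^p_{\fm}(\Ext^q_R(M,N)) \;\Longrightarrow\; \mathbb{H}^{p+q},$$
$${}^{\prime\prime}E_2^{p,q} = \Ext^p_R(M, \hh^q_{\fm}(N)) \;\Longrightarrow\; \mathbb{H}^{p+q}.$$

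I then feed in the two hypotheses. The depth bound $\depth N \geq t$ kills $\hh^q_{\fm}(N)$ for $q < t$, so the second sequence forces $\mathbb{H}^{n} = 0$ for all $n \leq t-1$. The support condition makes each $\Ext^q_R(M,N)$ with $1 \leq q \leq t-1$ into an $\fm$-torsion module, so ${}^{\prime}E_2^{p,q} = 0$ whenever $p \geq 1$ and $1 \leq q \leq t-1$. Tracking the first sequence in this sparse pattern: all outgoing differentials from $(0, t-1)$ on pages $r = 2, \ldots, t-1$ land in the vanishing block, and all incoming differentials into $(t, 0)$ on those pages originate in it; hence ${}^{\prime}E_t^{0, t-1} = \Ext^{t-1}_R(M,N)$ and ${}^{\prime}E_t^{t, 0} = \hh^t_{\fm}(\Hom_R(M,N))$. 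The transgression
$$d_t \colon \Ext^{t-1}_R(M, N) \longrightarrow \hh^t_{\fm}(\Hom_R(M, N))$$
therefore exists; its kernel is ${}^{\prime}E_\infty^{0, t-1}$, a subquotient of $\mathbb{H}^{t-1} = 0$, so $d_t$ is the required injection.

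The main obstacle is a clean setup of the base-change identification together with the two hypercohomology spectral sequences having the stated $E_2$-pages; once those are in place, the rest is routine bookkeeping for a sparse first-quadrant spectral sequence. An alternative route, avoiding the spectral-sequence machinery, is to proceed by induction on $t$: the base case $t = 2$ can be extracted directly from the long exact sequence of $\Ext$ and $\hh^\bullet_{\fm}$ associated to a short exact sequence $0 \to \Hom_R(M,N) \to I \to C \to 0$ with $I$ the injective hull, and the inductive step reduces the depth of $N$ by killing a regular element, tracking how $\Supp \Ext^i_R(M,-)$ behaves under the resulting long exact sequence.
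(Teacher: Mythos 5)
The paper does not prove this lemma at all --- it is quoted verbatim from \cite[Lemma 3.2]{ACS} --- so there is no in-paper argument to compare against; judged on its own, your spectral-sequence proof is correct and complete. The base-change isomorphism $R\Gamma_{\fm}R\Hom_R(M,N)\cong R\Hom_R(M,R\Gamma_{\fm}(N))$ does hold for finitely generated $M$ (via a free resolution $P_\bullet\to M$ and the \v{C}ech model, using $\check{C}\otimes_R\Hom_R(P,N)\cong\Hom_R(P,\check{C}\otimes_RN)$ for $P$ finite free), the two $E_2$-pages are as you state, and your bookkeeping checks out: $\depth N\geq t$ kills the abutment in degrees $\leq t-1$; the support hypothesis makes $\Ext^q_R(M,N)$ finite length for $1\leq q\leq t-1$, so ${}^{\prime}E_2^{p,q}=0$ there for $p\geq 1$ while ${}^{\prime}E_2^{0,q}=\Ext^q_R(M,N)$; the positions $(0,t-1)$ and $(t,0)$ survive unchanged to page $t$; and $\ker d_t={}^{\prime}E_\infty^{0,t-1}$ is a subquotient of $\mathbb{H}^{t-1}=0$. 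This is the standard argument for statements of this type. One caveat: your sketched ``elementary'' alternative via an injective hull of $\Hom_R(M,N)$ is not the natural inductive route; the usual elementary version instead breaks the complex $\Hom_R(F_\bullet,N)$ (for $F_\bullet\to M$ free) into short exact sequences of cycles and boundaries and chases local cohomology, using $\depth N\geq t$ to shift degrees. But since that is only offered as a backup, it does not affect the correctness of your main proof.
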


Recall  from Auslander \cite{A2}
  that $M$  is called  tor-rigid provided that the vanishing of a single $\Tor^R_j(M,N)$ for some $N\in mod(R)$ and for some $j\geq 1$ forces the vanishing of $\Tor^R_i(M,N)$ for all $i\geq j$. 	

\begin{lemma}\label{Jo}(Jothilingam, \cite[Theorem]{Joth}). Assume $R$ is a local ring and let $M$ be tor-rigid. If $\Ext^n_R(M,M)=0$ for some $n\geq 1$, then $\pd_R(M)<n$.
\end{lemma}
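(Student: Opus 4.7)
The plan is to take a minimal free resolution of $M$, reduce the hypothesis $\Ext^n_R(M,M)=0$ to a single degree-one vanishing by dimension shifting, translate this into a Tor-vanishing using Auslander's transpose, and then let tor-rigidity of $M$ propagate the vanishing until the relevant syzygy is forced to be zero.

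Concretely, fix a minimal free resolution $F_\bullet \to M$ and let $\Omega^i := \Omega^i_R(M)$ denote the $i$-th syzygy. Since $\Ext^{\geq 1}_R(F_j, M) = 0$ for each $j$, standard dimension shifting in the first variable yields $\Ext^n_R(M, M) \cong \Ext^1_R(\Omega^{n-1}, M)$, so the hypothesis becomes $\Ext^1_R(\Omega^{n-1}, M) = 0$. Applying $\Hom_R(-, M)$ to the short exact sequence $0 \to \Omega^n \to F_{n-1} \to \Omega^{n-1} \to 0$, the resulting long exact sequence of Ext gives the surjection $\Hom_R(F_{n-1}, M) \twoheadrightarrow \Hom_R(\Omega^n, M)$: every homomorphism $\Omega^n \to M$ extends to $F_{n-1} \to M$.

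Next, I would pass to Auslander's transpose $\operatorname{Tr}$ via the canonical four-term Auslander--Bridger exact sequence
\[
0 \to \Ext^1_R(\operatorname{Tr} N, M) \to N \otimes_R M \to \Hom_R(N^*, M) \to \Ext^2_R(\operatorname{Tr} N, M) \to 0,
\]
applied with $N = \Omega^{n-1}$, together with the higher-degree identifications relating $\Ext^{\geq 3}_R(\operatorname{Tr} N, M)$ to Tor groups of $N$ (or $N^*$) against $M$. This machinery extracts from $\Ext^1_R(\Omega^{n-1}, M) = 0$ a single Tor-vanishing of the form $\Tor^R_j(M, L) = 0$ for some $j \geq 1$ and a test module $L$ built from $\Omega^{n-1}$ and $(\Omega^{n-1})^*$. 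Tor-rigidity of $M$ then forces $\Tor^R_i(M, L) = 0$ for all $i \geq j$, and tracing this chain of vanishings back through the transpose dictionary and the syzygy shift $\Tor^R_{i+1}(M, -) \cong \Tor^R_i(\Omega M, -)$ shows that $\Omega^n$ must be projective; minimality of the resolution then forces $\Omega^n = 0$, i.e.\ $\pd_R(M) < n$.

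The main obstacle is the second step: producing the precise single Tor-vanishing on which tor-rigidity can act. The Auslander--Bridger four-term sequence mixes tensor products with Hom-duals, so extracting a clean Tor-vanishing requires careful handling of the biduality map for $\Omega^{n-1}$ and of the higher identifications between $\Ext^{\geq 3}_R(\operatorname{Tr} N, M)$ and Tor groups. Once this bridge is in place, tor-rigidity combined with dimension shifting closes the argument mechanically.
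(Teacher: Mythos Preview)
The paper does not prove this lemma; it merely cites Jothilingam. So there is no in-paper argument to compare against, and the relevant question is whether your sketch is correct on its own. It has two genuine gaps.

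First, the four-term sequence you invoke is the wrong one. The Ext-version
\[
0 \to \Ext^1_R(\operatorname{Tr} N, M) \to N \otimes_R M \to \Hom_R(N^*, M) \to \Ext^2_R(\operatorname{Tr} N, M) \to 0
\]
involves $N \otimes M$ and $\Hom(N^*, M)$; it does \emph{not} contain $\Ext^1_R(N, M)$ anywhere, so with $N = \Omega^{n-1}$ it simply never sees your hypothesis $\Ext^1_R(\Omega^{n-1}, M) = 0$. The sequence that actually does the job is the Tor-version
\[
0 \to \Tor_2^R(\operatorname{Tr} N, M) \to N^* \otimes_R M \xrightarrow{\ \psi_N\ } \Hom_R(N, M) \to \Tor_1^R(\operatorname{Tr} N, M) \to 0,
\]
applied with $N = \Omega^n$ (not $\Omega^{n-1}$). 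Your surjection $\Hom_R(F_{n-1}, M) \twoheadrightarrow \Hom_R(\Omega^n, M)$ factors as $F_{n-1}^* \otimes M \to (\Omega^n)^* \otimes M \xrightarrow{\psi_{\Omega^n}} \Hom_R(\Omega^n, M)$, forcing $\psi_{\Omega^n}$ onto and hence $\Tor_1^R(\operatorname{Tr}\Omega^n, M) = 0$; now tor-rigidity of $M$ applies. So the ``main obstacle'' you flag dissolves once the correct sequence and the correct $N$ are chosen; as written, your displayed sequence gives no handle on the hypothesis at all.

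Second, your closing sentence is false: ``$\Omega^n$ projective, so minimality forces $\Omega^n = 0$'' already fails for $R = k[[x,y]]$, $M = k$, $n = 2$, where the Koszul resolution is minimal and $\Omega^2 = R \neq 0$. Freeness of $\Omega^n$ only yields $\pd_R M \le n$. To obtain the strict inequality you must add the standard observation that if $0 \neq M$ has $\pd_R M = p < \infty$ then $\Ext^p_R(M, M) \neq 0$ (the top differential in a minimal resolution has entries in $\fm$, so the cokernel computing $\Ext^p$ surjects onto $(M/\fm M)^{\rank F_p} \neq 0$); combining this with $\Ext^n_R(M,M) = 0$ then rules out $p = n$.
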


Here, the ring	$R$ is called almost factorial if the class group of R is torsion, i.e. $R$ is $\mathbb{Q}$-factorial.

\begin{lemma}\label{d}(Dao, see \cite[Theorem 2.7(3)]{d}). Let $R$ be a local 3-dimensional hypersurface ring such that $\widehat{R} = S/(f)$ where $(S, \fn)$ is an equicharacteristic or unramified regular local ring and $f\in \fn$. If $R$ is almost factorial with isolated singularity, then every finitely generated $R$-module is tor-rigid.
\end{lemma}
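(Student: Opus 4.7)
The plan is to reduce to the completion $\widehat R=S/(f)$ (tor-rigidity descends via faithful flatness) and exploit two structural features of hypersurfaces with isolated singularity. By Eisenbud's matrix-factorization theorem, every sufficiently high syzygy of a finitely generated $\widehat R$-module is MCM and admits a matrix factorization of $f$, which forces $\Tor^{\widehat R}_i(M,N)$ to be $2$-periodic in $i$ for $i\gg 0$. The isolated singularity hypothesis gives $\Tor^{\widehat R}_i(M,N)_{\fp}=0$ for every $\fp\neq\fm$ and $i\gg 0$, so these Tors have finite length. Together these permit Hochster's theta invariant
\[
\theta(M,N):=\ell_{\widehat R}\bigl(\Tor^{\widehat R}_{2n+2}(M,N)\bigr)-\ell_{\widehat R}\bigl(\Tor^{\widehat R}_{2n+1}(M,N)\bigr)\quad(n\gg 0),
\]
which is independent of $n$, biadditive on short exact sequences, and vanishes whenever one argument has finite projective dimension.

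The key technical step is to show that $\theta$ is controlled by the divisor class group. Specifically, I would argue that $\theta(I,J)$ depends bilinearly on the classes $[I],[J]\in\cl(\widehat R)$ for rank-one reflexive $\widehat R$-modules $I,J$, so that $\theta$ descends to a $\mathbb Z$-valued biadditive pairing on $\cl(\widehat R)\times\cl(\widehat R)$. Since $\widehat R$ is almost factorial, $\cl(\widehat R)$ is torsion and any such pairing must vanish identically. A devissage using second syzygies and biadditivity of $\theta$ then extends the vanishing from rank-one reflexives to arbitrary MCM modules, and so $\theta\equiv 0$ on $\widehat R$.

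To conclude tor-rigidity, suppose $\Tor^{\widehat R}_j(M,N)=0$ for some $j\ge 1$. Replacing $M$ by a high syzygy (which preserves tor-rigidity), we may assume $M$ is MCM. The $2$-periodicity then gives $\Tor^{\widehat R}_i(M,N)\cong\Tor^{\widehat R}_{i+2}(M,N)$ for all $i\ge 1$, so the vanishing at $j$ propagates through every index of the same parity as $j$. The constraint $\theta(M,N)=0$ forces the lengths of odd- and even-indexed Tors (in the periodic range) to coincide, so the vanishing must extend to the opposite parity as well, yielding $\Tor^{\widehat R}_i(M,N)=0$ for all $i\ge j$.

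The principal obstacle will be the descent of $\theta$ to $\cl(\widehat R)$: one must show that $\theta$ is really bilinear on the class group, not merely on rank-one reflexive modules as a set. This requires a careful analysis of how short exact sequences of reflexive modules interact with the matrix-factorization periodicity, and it is precisely here that the almost-factoriality hypothesis reduces the problem to the triviality of a $\mathbb Z$-valued bilinear pairing on a torsion group. The final propagation step is then a routine consequence of periodicity once $\theta\equiv 0$ is in hand.
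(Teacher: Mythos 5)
The paper does not prove this lemma at all: it is quoted verbatim as an external result, namely \cite[Theorem 2.7(3)]{d}, so there is no internal proof to compare against. What you have written is a reconstruction of Dao's own published argument --- reduce to the completion, pass to MCM syzygies via matrix factorizations, introduce Hochster's $\theta$ pairing, show $\theta\equiv 0$ using torsionness of the class group, and deduce rigidity from $\theta=0$ plus $2$-periodicity. At the level of strategy this is exactly the right (and, as far as I know, the only known) route.

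However, as a proof it has a genuine gap, and you have located it yourself: the claim that $\theta$ descends to a $\mathbb{Z}$-valued biadditive pairing on $\cl(\widehat R)\times\cl(\widehat R)$ \emph{is} the substantive content of Dao's theorem, not a technical lemma one can wave at. Establishing it requires serious input beyond biadditivity: one must show that $\theta$ factors through the reduced Grothendieck group $\overline{G}(\widehat R)$, that for a three-dimensional isolated hypersurface singularity (equicharacteristic or unramified --- this is where that hypothesis enters, via vanishing of $\Tor^S$ in the change-of-rings sequence and proven cases of the ``decent intersection'' property $\theta(R/\fp,R/\fq)=0$ when $\dim R/\fp+\dim R/\fq\le\dim R$) the only possibly nonzero contributions come from pairs of height-one primes, and that the class $[R/\fp]$ is controlled by $[\fp]\in\cl(\widehat R)$ modulo terms that $\theta$ kills. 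None of this is supplied. A second, smaller gap: your closing parity argument proves rigidity only in the stable range where the resolution is already periodic, and the reduction ``replace $M$ by a high syzygy'' only transports vanishing of $\Tor_j$ for $j$ larger than the number of syzygy steps taken; the definition of tor-rigidity used in the paper requires the implication for \emph{every} $j\ge 1$, and handling small $j$ needs the finer length-counting argument in the Lichtenbaum change-of-rings sequence that Dao actually carries out. So the proposal is a correct road map to the literature rather than a proof; for the purposes of this paper the honest course is what the paper does, namely cite Dao.
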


	By $(G_i)$ (resp.   $(\R_i)$) we mean $R_\fp$ is Gorenstein (resp. regular) for all $\fp\in\Spec(R)$ of  height at most $i$. 
Recall that  a module $M$ satisfies $(\Se_i)$ if $\depth(M_{\fp})\geq \min\{i,\dim(M_\fp)\}$ for all $\fp\in\Spec(R)$.

\begin{theorem}
Let $(R,\fm)$ be a local 3-dimensional hypersurface ring such that $\widehat{R} = S/(f)$ where $(S, \fn)$ is an equicharacteristic or unramified regular local ring and $f\in \fn$. If $R$ is almost factorial with isolated singularity, then $R$ is $\UFD$.
\end{theorem}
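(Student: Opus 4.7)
The plan is to show that every height-one prime $\fp$ of $R$ is principal; by Fact \ref{uf} this will give $R$ is $\UFD$. Since $R$ is a local domain and $\fp$ has rank one, $\fp$ is principal if and only if $\fp$ is free of rank one, i.e.\ $\pd_R(\fp)=0$. The tor-rigidity supplied by Dao's lemma (Lemma \ref{d}) combined with Jothilingam's criterion (Lemma \ref{Jo}) reduces the problem to a single vanishing: it suffices to show $\Ext^1_R(\fp,\fp)=0$, because then Lemma \ref{Jo} with $n=1$ forces $\pd_R(\fp)<1$.

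To secure that vanishing I would invoke Lemma \ref{t1} with $M=N=\fp$ and $t=2$. The two hypotheses to verify are $\depth(\fp)\geq 2$ and $\Supp_R(\Ext^1_R(\fp,\fp))\subseteq\{\fm\}$. The depth bound follows from the short exact sequence $0\to\fp\to R\to R/\fp\to 0$ together with $\depth R=3$ (since $R$ is Cohen-Macaulay) and $\depth(R/\fp)\geq 1$ (since $R/\fp$ is a domain). The support condition is immediate from the isolated singularity hypothesis: at any non-maximal prime $\fq$, the ring $R_\fq$ is regular, hence a $\UFD$, so $\fp_\fq$ is principal and thus free, which kills $\Ext^1$ after localization.

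Lemma \ref{t1} then yields an embedding $\Ext^1_R(\fp,\fp)\hookrightarrow\hh^2_\fm(\End_R(\fp))$, and two facts close the argument. First, $\End_R(\fp)=R$: the ring $R$ is Cohen-Macaulay with isolated singularity, hence satisfies $(\R_1)$ and $(\Se_2)$ and is therefore normal; since $\End_R(\fp)$ embeds in the fraction field and coincides with $R_\fq$ at every height-one prime (where $\fp$ is principal), normality forces the equality globally. Second, $\hh^2_\fm(R)=0$ because $\depth R=3$. Combining these gives $\Ext^1_R(\fp,\fp)=0$, and we are done.

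The essential content is packaged into Dao's tor-rigidity theorem, which is cited rather than re-proved, so the present argument is really just a bookkeeping exercise that marshals Lemmas \ref{t1}, \ref{d}, and \ref{Jo} into a single three-line chain. The only step demanding a little care is the identification $\End_R(\fp)=R$, and even that is standard once normality and local principality in codimension one are in hand; I would regard this as the main, but modest, technical obstacle.
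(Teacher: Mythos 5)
Your proposal is correct and follows essentially the same route as the paper: reduce to a height-one prime $\fp$, use the isolated singularity to get local freeness off $\fm$, identify $\End_R(\fp)$ with $R$ via normality, embed $\Ext^1_R(\fp,\fp)$ into $\hh^2_{\fm}(R)=0$ via Lemma \ref{t1}, and conclude with Dao's tor-rigidity and Jothilingam. The only (immaterial) variations are that you derive $\depth(\fp)\geq 2$ from the depth lemma applied to $0\to\fp\to R\to R/\fp\to 0$ rather than from reflexivity, and you justify $\End_R(\fp)=R$ by agreement in codimension one rather than by the determinant trick.
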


\begin{proof}
	Let $\fp\in\Spec^1(R)$, i.e., a prime ideal  of height one. Since the ring
is	with isolated singularity, $\fp$ is locally principal over the punctured spectrum (see Fact \ref{uf}). By Serre, $(\Se_2)$+$(\R_1)$  characterizes normality. So, $R$ is normal. In fact, it is supernormal. Applying this along with the determinate trick, we observe that 
	
$$R\subseteq\Hom_R(\fp,\fp)\subseteq\overline{R}=R\quad(\ast)$$Here, $(-)^\ast$ means $\Hom_R(-,R)$. As $\fp=\fp^{\ast\ast}$ we know $\depth(\fp)=\depth(\hom(\fp^\ast,R))\geq 2$. In view of Lemma
	\ref{t1} we know there is an injection $$\Ext^{1}_R(\fp,\fp)\hookrightarrow\hh_{\fm}^{2}(\Hom_R(\fp,\fp))\stackrel{(\ast)}=\hh_{\fm}^{2}(R).$$ Since $\depth(R)=3$ we have $\hh_{\fm}^{2}(R)=0$. Consequently, 
	we deduce that $\Ext^{1}_R(\fp,\fp)=0$. According to Lemma \ref{d}, $\fp$ is tor-rigid. Let us apply Jothilingam's result to deduce that $\fp$ is free. As free ideals are principal, we observe that $\fp$ is principal. By Fact \ref{uf} $R$ is UFD.
\end{proof}
\medskip
\section{A new proof of regular rings are UFD}

In this section we apply some local cohomology and reprove a celebrated theorem of Auslander--Buchsbaum \cite{AB}.
We apply their strategy, but almost every thing is different from \cite{AB}.

\begin{lemma}\label{to}
Let $R$ be of depth at most two, and $\fp\in\Spec^1(R)$. If $\pd_R(R/\fp)< \infty$, then $\fp$ is principal.
\end{lemma}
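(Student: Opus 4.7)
The plan is to combine the Auslander--Buchsbaum formula with the Hilbert--Burch structure theorem and the primality of $\fp$ to force $\fp$ to be principal.

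First, I would apply the Auslander--Buchsbaum formula to $R/\fp$. Since $\pd_R(R/\fp)<\infty$,
\[
\pd_R(R/\fp)=\depth R-\depth_R(R/\fp)\leq\depth R\leq 2,
\]
and the short exact sequence $0\to\fp\to R\to R/\fp\to 0$ then yields $\pd_R(\fp)\leq 1$.

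Next, since $\fp$ has height one it contains a nonzerodivisor, so $\fp$ has rank one as an $R$-module. Consequently a minimal free resolution has the shape
\[
0\to R^n\xrightarrow{\phi}R^{n+1}\to\fp\to 0
\]
for some $n\geq 0$. When $n=0$ the ideal $\fp$ is free and cyclic, hence principal. When $n\geq 1$, I would invoke Hilbert--Burch to obtain a nonzerodivisor $d\in R$ with $\fp=d\cdot I_n(\phi)$ and $\grade I_n(\phi)\geq 2$, where $I_n(\phi)$ is the ideal of $n\times n$ minors of $\phi$.

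Finally, I would exploit primality to force $d\in\fp$. Setting $J:=I_n(\phi)$, assume for contradiction that $d\notin\fp$; then for every $j\in J$ the element $dj$ lies in $\fp$, and primality of $\fp$ gives $J\subseteq\fp$. If $d$ were a unit then $\fp=dJ=J$, contradicting $\grade\fp\leq\Ht\fp=1<2\leq\grade J$; hence $d\in\fm$, and $\fp=dJ\subseteq d\fp\subseteq\fm\fp$ yields $\fp=0$ by Nakayama, contradicting $\Ht\fp=1$. Therefore $d\in\fp$, which, since $d$ is a nonzerodivisor and $\fp=dJ$, forces $1\in J$; hence $J=R$ and $\fp=(d)$.

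The main obstacle, in my view, is the primality extraction: converting the Hilbert--Burch factorization $\fp=dJ$ into the statement $d\in\fp$ depends crucially on playing the grade bound $\grade J\geq 2$ off against the height-one constraint $\grade\fp\leq 1$, and on Nakayama's lemma. Once $d$ is located inside $\fp$, principality is immediate.
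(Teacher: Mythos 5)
Your argument is correct, but it takes a genuinely different and much heavier route than the paper's. The paper's proof is essentially two lines: if $\fp=\fm$ then $R$ is a DVR; otherwise $R/\fp$ is a local domain that is not a field, so $\depth(R/\fp)\geq 1$, and the Auslander--Buchsbaum formula gives $\pd_R(R/\fp)=\depth(R)-\depth(R/\fp)\leq 1$, whence $\fp$ is projective, hence free, hence (being an ideal of rank one) principal --- no structure theory is needed. You drop the observation $\depth(R/\fp)\geq 1$, settle for the weaker bound $\pd_R(\fp)\leq 1$, and then compensate with Hilbert--Burch plus the primality/Nakayama argument. Two caveats about your version. First, the justification that $\fp$ contains a nonzerodivisor (\emph{since $\fp$ has height one}) is not valid over an arbitrary local ring, where a height-one prime can consist of zerodivisors; the correct reason here is that a nonzero ideal admitting a finite free resolution contains a nonzerodivisor (or one assumes, as the paper implicitly does elsewhere, that $R$ is a domain). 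Second, your case $n\geq 1$ is actually vacuous: minimality of the resolution forces $I_n(\phi)\subseteq\fm$, which is incompatible with the conclusion $J=R$ that you derive; this does not invalidate the proof (every branch still ends in principality), but it obscures the fact that $\fp$ is simply free. The genuine payoff of your approach is that it proves more: since you use $\depth R\leq 2$ only to get $\pd_R(\fp)\leq 1$, your argument establishes Proposition \ref{aus} of the paper ($\pd_R(R/\fp)\leq 2$ implies $\fp$ principal) directly, with no induction on dimension, no local cohomology, and no appeal to tor-rigidity or Jothilingam's theorem.
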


\begin{proof}
	Suppose first that $\fp=\fm$. This says  $R$ is DVR, and the claim is clear. Now, we can assume $\fp\neq\fm$. Then $\depth(R/\fp)>0$. By Auslander-Buchsbaum formula,
	$$\depth(R)=\depth(R/\fp)+\pd_R(R/\fp).$$
	This gives
	$\pd_R(R/\fp)\leq1$, and so $\fp$ is free.
\end{proof}
\begin{proposition}\label{aus}
Let	$\fp\in\Spec^1(R)$. If $\pd_R(R/\fp)\leq 2$, then $\fp$ is principal.
	
\end{proposition}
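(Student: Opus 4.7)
My plan is induction on $d=\dim R$, the base case $d\leq 2$ being immediate: then $\depth R\leq d\leq 2$ and Lemma \ref{to} applies (the hypothesis $\pd_R(R/\fp)<\infty$ being in force). For the inductive step with $d\geq 3$, if $\depth R\leq 2$ I again finish by Lemma \ref{to}, so I reduce to the case $\depth R\geq 3$.

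The first substantive step is to secure local principality of $\fp$ on the punctured spectrum. For each $\fq\in\Spec(R)\setminus\{\fm\}$ containing $\fp$, the localization $R_\fq$ has smaller dimension than $R$, $\fp R_\fq$ is still a height-one prime, and $\pd_{R_\fq}(R_\fq/\fp R_\fq)\leq 2$ is inherited under localization, so the inductive hypothesis makes $\fp R_\fq$ principal; for $\fq$ not containing $\fp$ there is nothing to verify. Consequently $\Supp\Ext^{i}_R(\fp,\fp)\subseteq\{\fm\}$ for every $i\geq 1$.

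Next I would do the depth bookkeeping. Auslander--Buchsbaum gives $\depth(R/\fp)=\depth R-\pd_R(R/\fp)\geq 1$, so the depth lemma applied to $0\to\fp\to R\to R/\fp\to 0$ yields $\depth\fp\geq 2$, and dualizing a free presentation of $\fp$ produces the standard estimate $\depth\Hom_R(\fp,\fp)\geq\min(\depth\fp,2)=2$. The inclusion $R\hookrightarrow\Hom_R(\fp,\fp)$ is injective (since $\fp$, a height-one prime with $\pd_R(R/\fp)<\infty$ making $R_\fp$ a DVR, cannot be an associated prime of $R$ and so contains a nonzerodivisor), and its cokernel $Q$ is supported in $\{\fm\}$ by the previous paragraph. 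The depth lemma applied to $0\to R\to\Hom_R(\fp,\fp)\to Q\to 0$, together with $\depth R\geq 3$ and $\depth\Hom_R(\fp,\fp)\geq 2$, forces $\depth Q\geq 2$; but an $\fm$-torsion module has depth zero unless it vanishes, hence $Q=0$ and $\Hom_R(\fp,\fp)=R$.

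Now Lemma \ref{t1} with $M=N=\fp$ and $t=2$ delivers an injection $\Ext^{1}_R(\fp,\fp)\hookrightarrow\hh^{2}_\fm(\Hom_R(\fp,\fp))=\hh^{2}_\fm(R)$, the right side being zero because $\depth R\geq 3$. Since $\pd_R\fp\leq 1$, higher Tor of $\fp$ vanishes automatically, so $\fp$ is trivially tor-rigid; Jothilingam's Lemma \ref{Jo} then gives $\pd_R\fp<1$, so $\fp$ is free, and a free ideal of positive height in a commutative ring has rank one and is therefore principal. The step I anticipate as the main obstacle is the identification $\Hom_R(\fp,\fp)=R$: without a normality hypothesis the determinantal trick of the previous theorem is unavailable, and the depth-squeeze replacement works only after local principality on the punctured spectrum has been secured by induction, which is what forces the argument to run inductively on $\dim R$.
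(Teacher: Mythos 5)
Your proof is correct and follows essentially the same route as the paper: induction on $\dim R$ with Lemma \ref{to} handling $\depth R\leq 2$, local principality on the punctured spectrum from the inductive hypothesis, the embedding $\Ext^1_R(\fp,\fp)\hookrightarrow\hh^2_\fm(\Hom_R(\fp,\fp))$ from Lemma \ref{t1}, vanishing of $\hh^2_\fm(R)$, and rigidity plus Jothilingam to conclude freeness. The only (harmless) divergence is at the identification of $\hh^2_\fm(\Hom_R(\fp,\fp))$ with $\hh^2_\fm(R)$: you prove the stronger statement $\Hom_R(\fp,\fp)=R$ by a depth squeeze on the finite-length cokernel, whereas the paper leaves the cokernel alone and just reads off the isomorphism of $\hh^2_\fm$'s from the long exact sequence of local cohomology; your handling of the injectivity of $R\to\Hom_R(\fp,\fp)$ via the DVR property of $R_\fp$ is in fact cleaner than the paper's appeal to $R$ being a domain.
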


\begin{proof}
	The proof is by induction on $d:=\dim(R)$.
By Lemma \ref{to}, we may assume $\depth(R)\geq 3$. Following induction hypothesis, $\fp$ is locally free over the punctured spectrum (see Fact \ref{uf}). Let $\mu_r:R\to R$ defined via the assignment $x\mapsto rx$. There is a natural map
$\pi:R\to \Hom_R(\fp,\fp)$ sending $r$ into $\mu_r$. Since $R$ is domain, $\fp$ is torsion-free. Consequently, the map $\pi$ is injective. Then we have $$0\lo R\stackrel{\pi}\lo\Hom_R(\fp,\fp)\lo C:=\coker(\pi)\lo 0\quad(+)$$Since
$\fp$ is locally principal over the  punctured spectrum, $C$ is of finite length. By Grothendieck's vanishing theorem $\hh_{\fm}^{+}(C)=0 $. We plug this in the long exact sequence of local cohomology modules induced by $(+)$ to deduce the following exact sequence
$$0=\hh_{\fm}^{1}(C)\lo\hh_{\fm}^{2}(R)\lo\hh_{\fm}^{2}(\Hom_R(\fp,\fp))\lo\hh_{\fm}^{2}(C)=0 \quad(\dagger)$$
Now, we apply this along with the following embedding: $$\Ext^{1}_R(\fp,\fp)\hookrightarrow\hh_{\fm}^{2}(\Hom_R(\fp,\fp))\stackrel{(\dagger)}=\hh_{\fm}^{2}(R)=0.$$Since $\pd_R(\fp)\leq 1$ it is rigid. Let us apply Jothilingam's result to deduce that $\fp$ is free\footnote{or even without any use of Jothilingam.}.
\end{proof}

\begin{corollary}
Any 3-dimensional regular ring is $\UFD$.
\end{corollary}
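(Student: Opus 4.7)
The plan is to derive the corollary directly from Proposition 3.2 together with Fact 2.1. Let $(R,\fm)$ be a $3$-dimensional regular local ring and let $\fp\in\Spec^1(R)$; it suffices, by Fact 2.1, to show that $\fp$ is principal.

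First I would collect the standard consequences of regularity: $R$ is a domain, $\depth R=\dim R=3$, and every finitely generated $R$-module has finite projective dimension. Since $\fp$ has height one and $\dim R=3$, we have $\fp\neq\fm$, so $\dim(R/\fp)=2$. Because $\fp$ is prime, $R/\fp$ is a domain, hence any element of $\fm\setminus\fp$ is a non-zero-divisor on $R/\fp$, giving $\depth(R/\fp)\geq 1$.

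Next I would invoke the Auslander--Buchsbaum formula (which applies since $\pd_R(R/\fp)<\infty$):
\[
\pd_R(R/\fp)=\depth R-\depth(R/\fp)\leq 3-1=2.
\]
Now Proposition 3.2 applies to $\fp$ and yields that $\fp$ is principal. Since this holds for every $\fp\in\Spec^1(R)$, Fact 2.1 delivers that $R$ is $\UFD$.

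There is no real obstacle here; the entire content of the corollary has been pushed into Proposition 3.2, and the only thing to verify is the simple depth inequality $\depth(R/\fp)\geq 1$ needed to activate that proposition.
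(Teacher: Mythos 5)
Your proof is correct and is essentially the argument the paper intends (the paper states this corollary without proof, as an immediate consequence of Proposition \ref{aus}): the Auslander--Buchsbaum formula gives $\pd_R(R/\fp)=3-\depth(R/\fp)\leq 2$ since $\depth(R/\fp)\geq 1$ for a height-one prime $\fp\neq\fm$, and then Proposition \ref{aus} together with Fact \ref{uf} finishes. No issues.
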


\begin{corollary}(Auslander--Buchsbaum)\label{ausb}
	Any  regular ring is $\UFD$.
\end{corollary}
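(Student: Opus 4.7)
The plan is to combine Proposition \ref{aus} with Grothendieck's Theorem \ref{groth} and thereby package everything into a dimension induction. Let $(R,\fm)$ be a regular local ring of dimension $d$. By Fact \ref{uf}, it suffices to show that every $\fp\in\Spec^1(R)$ is principal.

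\textbf{Low-dimensional base ($d\leq 3$).} For $d=1$ the ring is a DVR and the only height-one prime is $\fm$ itself, which is principal. For $d\in\{2,3\}$ we have $\fp\neq\fm$, so $\depth(R/\fp)\geq 1$, and Auslander--Buchsbaum gives
\[
\pd_R(R/\fp)\;=\;d-\depth(R/\fp)\;\leq\;d-1\;\leq\;2.
\]
Hence Proposition \ref{aus} applies and forces $\fp$ to be principal. (In particular this recovers the previous corollary.)

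\textbf{Reduction to codimension three ($d\geq 4$).} Here I would invoke Grothendieck's Theorem \ref{groth}. Regular local rings are complete-intersection domains: the maximal ideal $\fm$ is generated by a regular system of parameters, which is a regular sequence, giving the CI property; and $R$ is normal (Serre's criterion $(\R_1)+(\Se_2)$, both trivial for regular local rings) and local, hence a domain. For any $P\in\Spec(R)$ with $\Ht(P)\leq 3$, the localization $R_P$ is a regular local ring of dimension at most three, hence $\UFD$ by the base case. Theorem \ref{groth} then concludes that $R$ itself is $\UFD$.

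The deep content is already encoded in Proposition \ref{aus} (whose proof uses the local cohomology/$\Hom$-trick with $\hh_{\fm}^2(R)=0$) and in Grothendieck's Theorem. The ``obstacle'' here is therefore purely organizational: the only things to verify are the two standard facts that a regular local ring is a CI domain and that $\pd_R(R/\fp)\leq 2$ whenever $\dim R\leq 3$ and $\fp$ has height one.
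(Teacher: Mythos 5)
Your proof is correct in substance, and its base case is exactly the paper's: for $d\leq 3$ the Auslander--Buchsbaum formula gives $\pd_R(R/\fp)\leq 2$ for $\fp\in\Spec^1(R)$, and Proposition \ref{aus} finishes (this is the content of the preceding corollary). Where you genuinely diverge is the reduction step. The paper's one-line proof (``the desired property is reduced to the 3-dimensional case'') is an appeal to the classical reduction of Nagata: induction on dimension, Nagata's lemma that $R$ is a $\UFD$ iff $R_x$ is for a prime element $x\in\fm\setminus\fm^2$, and the fact that a rank-one projective with a finite free resolution is stably free, hence free. You instead route the reduction through Grothendieck's Theorem \ref{groth}; taken as a black box this is valid, since a regular local ring is a complete-intersection domain whose localizations at primes of height $\leq 3$ are regular of dimension $\leq 3$ and hence $\UFD$ by your base case. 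Two caveats, though. First, you replace an elementary reduction by a very deep one (parafactoriality/purity for complete intersections, SGA2), which cuts against the point of giving an easy reproof of Auslander--Buchsbaum. Second, and more seriously, there is a real risk of circularity: Grothendieck's proof of the Samuel conjecture leans on the parafactoriality of the ambient regular local ring, which is essentially the theorem being proved; to make your route honest you must check that the proof of Theorem \ref{groth} consumes factoriality of regular local rings only in dimension $\leq 3$ (which your base case supplies independently). The Nagata-style reduction the paper implicitly invokes avoids this issue entirely, so if you want a self-contained argument you should spell that reduction out rather than cite Theorem \ref{groth}.
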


\begin{proof}Recall that the desired property is reduced to the 3-dimensional case. Now, apply the previous corollary.
\end{proof}

\begin{fact}(See \cite[Theorem A]{B}). \label{b2}Let $A$ be a commutative noetherian ring and $M$ a finitely generated $A$-module. Suppose that
	\begin{enumerate}
		\item[i)] $\pd (M)< \infty$,
		\item[ii)] $\End_A(M)$ is a projective $A$-module,
		\item[iii)] $M$ is reflexive.\end{enumerate}
	Then $M$ is a (locally) Gorenstein $A$-module.
\end{fact}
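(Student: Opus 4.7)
The plan is to reduce to the local case and use the three hypotheses together to show $M$ has finite injective dimension, which is the essence of being a (locally) Gorenstein module. First, I would localize at an arbitrary prime $\fp$: since the conclusion and all three hypotheses are local, we may assume $(A,\fm)$ is local, and then condition (ii) upgrades from projectivity to freeness, say $\End_A(M) \cong A^n$.

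Second, I would extract Ext-vanishing from the freeness of $\End_A(M)$ via Lemma \ref{t1} with $N = M$, which provides injections $\Ext^{t-1}_A(M, M) \hookrightarrow \hh^t_\fm(\End_A(M)) \cong \hh^t_\fm(A)^n$. For $t < \depth(A)$ the right side vanishes, giving $\Ext^{t-1}_A(M, M) = 0$ in that range. To meet the auxiliary support condition $\Supp_R \Ext^i_A(M, M) \subseteq \{\fm\}$ required by Lemma \ref{t1}, I would proceed by induction on $\dim A$: the three hypotheses localize to the punctured spectrum, so the inductive conclusion for $A_\fp$ ($\fp \neq \fm$) controls $\Ext^i_{A_\fp}(M_\fp, M_\fp)$, forcing the global Ext modules to be supported at $\fm$. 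Combined with hypothesis (i) and the Auslander--Buchsbaum formula, this also constrains $\pd_A(M)$ and $\depth_A(M)$.

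Third, the Gorenstein conclusion emerges by combining the Ext-vanishing, finite projective dimension, and reflexivity through a duality argument. A natural route is to use reflexivity to exhibit $M$ as a direct summand of $A^n$ through the evaluation pairing $M \otimes_A M^{\ast} \to \End_A(M) \cong A^n$: the compatibility of this pairing with the double-dual isomorphism $M \cong M^{\ast\ast}$ should yield a splitting, realizing $M$ as a summand of a free module and hence projective, say $M \cong A^r$. Once $M$ is free, finite injective dimension of $M$ is equivalent to finite injective dimension of $A$; that $A$ is Gorenstein is then forced by Bass's theorem together with (i) applied to any module of finite projective dimension with the appropriate depth profile, making $M$ a (locally) Gorenstein $A$-module.

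The main obstacle is two-fold: the splitting of the evaluation map and the bookkeeping in the inductive step. The splitting requires a careful trace-style argument, leveraging both reflexivity and the rank calculation $\mathrm{rank}\, \End_A(M) = (\mathrm{rank}\, M)^2$ that follows from freeness of $\End_A(M)$. The inductive step is subtle because ``$M_\fp$ is a Gorenstein $A_\fp$-module'' does not by itself imply $\Ext^i_{A_\fp}(M_\fp, M_\fp) = 0$, the clean vanishing needed to feed back into Lemma \ref{t1}; one may need to strengthen the inductive hypothesis to include explicit Ext-vanishing, or to substitute an argument based on Gorenstein dimension and Auslander's formula for the reflexive dimension in place of the induction altogether.
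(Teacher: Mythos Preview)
The paper does not prove this statement: it is recorded as a \emph{Fact} and attributed to Braun \cite[Theorem~A]{B}, with no argument supplied. So there is no ``paper's own proof'' to compare against; I can only assess your proposal on its merits.

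Your third step is where the argument breaks. You want to split the evaluation map $M\otimes_A M^\ast\to\End_A(M)\cong A^n$ and thereby realize $M$ as a summand of a free module. But this map is an isomorphism \emph{precisely when} $M$ is projective; reflexivity alone does not produce a section, and no trace-style manipulation extracts freeness of $M$ from freeness of $\End_A(M)$ without already knowing $M$ is projective. So the splitting claim is circular.

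Even granting that $M$ turned out to be free, your final inference that $A$ must be Gorenstein is false. Take $A$ any non-Gorenstein local ring and $M=A$: all three hypotheses (i)--(iii) hold trivially, yet $\id_A(A)=\infty$. This shows two things at once. First, your strategy of proving ``$M$ free and $A$ Gorenstein'' is aiming at a strictly stronger conclusion than the Fact asserts, and that stronger conclusion is not true. Second, whatever ``(locally) Gorenstein $A$-module'' means in Braun's paper, it cannot simply be ``$\id_A(M)<\infty$'' in the naive sense, or the example $M=A$ would already contradict the Fact; you should consult \cite{B} for the exact definition before designing an argument. A smaller issue: in step~2, Lemma~\ref{t1} bounds $t$ by $\depth(M)$, not by $\depth(A)$, so the range of Ext-vanishing you obtain is narrower than you wrote.
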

Over normal rings, one has Kaplansky's trick. Let us recover it:
\begin{observation}\label{n}(Kaplansky's trick).
Let $R$ be normal, and $\fp\in\Spec^1(R)$. If $\pd_R(R/\fp)< \infty$, then $\fp$ is principal.
\end{observation}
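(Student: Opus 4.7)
The plan is to apply Braun's theorem, Fact~\ref{b2}, to the module $M=\fp$, and then extract principality from the resulting Gorenstein-module conclusion.

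First I will check the three hypotheses of Fact~\ref{b2}. From the short exact sequence $0\to\fp\to R\to R/\fp\to 0$ together with $\pd_R(R/\fp)<\infty$ one reads off $\pd_R(\fp)<\infty$, giving hypothesis (i). For hypothesis (ii), I will repeat the determinant-trick computation used earlier in the paper (in the proof of the theorem in \S2): any $\phi\in\End_R(\fp)$ acts on the rank-one module $\fp$ as multiplication by some $\alpha$ in the fraction field of $R$ satisfying $\alpha\fp\subseteq\fp$; Cayley-Hamilton then forces $\alpha$ to be integral over $R$, and normality gives $\alpha\in\overline{R}=R$. Hence $R\subseteq\End_R(\fp)\subseteq R$, so $\End_R(\fp)=R$ is $R$-projective. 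Hypothesis (iii), that $\fp$ is reflexive, is standard for height-one primes of a normal Noetherian domain (they are divisorial).

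Now Fact~\ref{b2} applies and yields that $\fp$ is a (locally) Gorenstein $R$-module. Unwinding the definition, this identifies $\fp$ with the canonical module $\omega_R$ (up to isomorphism, since $R$ is local and $\fp$ has rank one) and forces $R$ itself to be Cohen-Macaulay. Because $\omega_R$ is maximal Cohen-Macaulay, the Auslander-Buchsbaum formula collapses to $\pd_R(\fp)=\depth R-\depth\fp=0$, so $\fp$ is free of rank one and hence principal.

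The main obstacle I expect will be in this last step: translating the phrase ``Gorenstein module with finite projective dimension'' into concrete freeness. This requires unpacking Braun's definition carefully enough to extract both the maximal Cohen-Macaulay property of $\fp$ and the Cohen-Macaulay property of $R$; once these are in place, Auslander-Buchsbaum finishes the argument immediately.
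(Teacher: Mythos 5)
Your proposal is correct and follows essentially the same route as the paper: the paper's proof is exactly ``check that $\End_R(\fp)$ is projective and $\fp$ is reflexive, then apply Fact~\ref{b2},'' and you have simply filled in the verifications (finite projective dimension of $\fp$ from the short exact sequence, $\End_R(\fp)=R$ via the determinant trick and normality, reflexivity of height-one primes in a normal domain) together with the final step from ``Gorenstein module of finite projective dimension'' to ``free of rank one, hence principal'' via Auslander--Buchsbaum, which the paper leaves implicit.
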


\begin{proof}
It is easy to see $\Hom_R(\fp,\fp)$ is projective and $\fp$ is reflexive. It remains to apply Fact \ref{b2}.
\end{proof}

Strongly normal means
$(\Se_3)$+$(\R_2)$. Let us extend Kaplansky's trick:
\begin{proposition}\label{trp}	Let $R$ be strongly normal and $\fp\in\Spec^1(R)$. If $\fp$ is tor-rigid, then $\fp$ is principal.
\end{proposition}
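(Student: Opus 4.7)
The plan is to mimic the cohomological template from Proposition \ref{aus}: use the determinant trick on $\Hom_R(\fp,\fp)$ to identify it with $R$, invoke Lemma \ref{t1} to embed $\Ext^1_R(\fp,\fp)$ into $\hh^2_\fm(R)$, which vanishes by depth, and then apply Lemma \ref{Jo} to conclude from tor-rigidity that $\pd_R(\fp)=0$, so that $\fp$ is a free (necessarily rank-one, hence principal) ideal.

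I would proceed by induction on $d:=\dim R$. For $d\leq 2$, $(\R_2)$ forces $R$ itself to be regular, so Corollary \ref{ausb} makes $R$ a $\UFD$ and $\fp$ is principal. For $d\geq 3$, the properties $(\Se_3)$ and $(\R_2)$ both localize, so for any $\fq\in\Spec(R)\setminus\{\fm\}$ with $\fp\subseteq\fq$, $R_\fq$ is strongly normal of dimension $<d$. Assuming $\fp R_\fq$ inherits tor-rigidity over $R_\fq$, the induction hypothesis (together with $R_\fq$ being regular when $\Ht\fq\leq 2$) gives that $\fp R_\fq$ is principal. Hence $\fp$ is locally principal on the punctured spectrum, and in particular $\Supp_R\Ext^1_R(\fp,\fp)\subseteq\{\fm\}$.

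With these in hand, the determinant argument of Observation \ref{n} yields $R\subseteq\Hom_R(\fp,\fp)\subseteq\overline{R}=R$, giving $\Hom_R(\fp,\fp)=R$. Reflexivity of $\fp$ together with $\depth R\geq 3$ (from $(\Se_3)$ and $d\geq 3$) produces $\depth\fp\geq 2$. Lemma \ref{t1} with $t=2$ and $M=N=\fp$ then gives
$$\Ext^1_R(\fp,\fp)\hookrightarrow\hh^2_\fm(\Hom_R(\fp,\fp))=\hh^2_\fm(R)=0.$$
Combined with tor-rigidity, Lemma \ref{Jo} forces $\pd_R(\fp)<1$, so $\fp$ is free, hence principal.

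The main obstacle I foresee is the descent of tor-rigidity to the localizations $R_\fq$, implicit in the induction step. Auslander's definition requires a finitely generated $R$-module to witness rigidity, so one must lift finitely generated $R_\fq$-modules to finitely generated $R$-modules and track Tor under flat localization; this is standard but needs care. Should this descent turn out to be delicate, an alternative is to strengthen the hypothesis to isolated singularity (as in the 3-dimensional hypersurface theorem of \S 2), in which case local principality on the punctured spectrum is immediate from Fact \ref{uf} and no induction is needed.
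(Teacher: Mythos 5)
Your proposal follows essentially the same route as the paper's own (very terse) proof: induction on $\dim R$, with the base case handled by $(\R_2)$, the inductive step supplying local principality on the punctured spectrum, the determinant trick identifying $\Hom_R(\fp,\fp)$ with $R$, Lemma \ref{t1} embedding $\Ext^1_R(\fp,\fp)$ into $\hh^2_\fm(R)=0$, and Jothilingam's Lemma \ref{Jo} finishing via tor-rigidity. The descent-of-tor-rigidity issue you flag in the induction step is real, but it is equally unaddressed in the paper's one-line inductive step, so your write-up is if anything more careful than the original.
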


\begin{proof}The proof is by induction on $d:=\dim(R)$. Due to $(\R_2)$ condition
	we may assume that $d>2$. In the light of $(\Se_3)$ condition we may and do assume that
	$\depth(R)\geq 3$. By repeating the previous argument    $\Ext^{1}_R(\fp,\fp)\hookrightarrow\hh_{\fm}^{2}(\Hom_R(\fp,\fp))=\hh_{\fm}^{2}(R)=0$. It remains to use tor-rigidity, and conclude that $\fp$ is principal.
\end{proof}
\begin{observation}\label{tori}
	Let $\fp\in\Spec^1(R)$. If $\pd_R(R/\fp)< \infty$. Then $\fp$ is principal iff $\fp$ is tor-rigid.
\end{observation}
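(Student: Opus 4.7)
The plan is to treat the two implications of the biconditional separately.

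The forward direction is essentially trivial. If $\fp = (x)$ is principal, then $x$ must be a nonzerodivisor (since $\Ht \fp = 1$), so $\fp \cong R$ as an $R$-module; a free module is vacuously tor-rigid. Note that this direction does not even use the finite projective dimension hypothesis.

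For the substantive direction, I would follow the template of Propositions \ref{aus} and \ref{trp}. Assume $\fp$ is tor-rigid and $\pd_R(R/\fp) < \infty$; the short exact sequence $0 \to \fp \to R \to R/\fp \to 0$ then gives $\pd_R(\fp) < \infty$. The goal is to establish $\Ext^1_R(\fp,\fp) = 0$, from which Jothilingam's lemma (Lemma \ref{Jo}) applied with $n = 1$ forces $\pd_R(\fp) = 0$, so that $\fp$ is free and hence principal (as any free ideal in a domain has rank one). Proceed by induction on $\dim R$. When $\depth R \leq 2$, Lemma \ref{to} already yields the conclusion from the finite projective dimension alone, so I may assume $\depth R \geq 3$. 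By the inductive hypothesis, together with Proposition \ref{aus} handling those localizations $R_\fq$ for which $\pd_{R_\fq}((R/\fp)_\fq) \leq 2$ (which at least covers $\Ht \fq \leq 3$, since by Auslander--Buchsbaum $\pd_{R_\fq}((R/\fp)_\fq) = \depth R_\fq - \depth (R/\fp)_\fq \leq \Ht\fq - 1$, using that $R_\fq/\fp_\fq$ is a domain of positive dimension), the ideal $\fp$ is locally principal on the punctured spectrum. Consequently the cokernel of the natural inclusion $R \hookrightarrow \Hom_R(\fp,\fp)$ has finite length, and the resulting local cohomology long exact sequence collapses to an isomorphism $\hh^2_\fm(\Hom_R(\fp,\fp)) \cong \hh^2_\fm(R) = 0$ (the final vanishing from $\depth R \geq 3$). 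Lemma \ref{t1}, with $t = 2$ and $M = N = \fp$---the depth hypothesis $\depth \fp \geq 2$ being automatic from $\depth R \geq 3$ and $\depth R/\fp \geq 1$---then yields the injection $\Ext^1_R(\fp,\fp) \hookrightarrow \hh^2_\fm(\Hom_R(\fp,\fp)) = 0$, closing the argument.

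The main obstacle is the inductive step that produces local principality on the punctured spectrum. For primes $\fq$ of height at most three, Proposition \ref{aus} applies directly; for primes of larger height, however, one needs the inductive hypothesis to kick in, which in turn requires the tor-rigidity of $\fp$ over $R$ to descend to tor-rigidity of $\fp_\fq$ over $R_\fq$. This descent is a standard but slightly delicate point about rigidity for modules over Noetherian local rings, and it seems to be the one technical ingredient beyond what is already explicitly assembled earlier in the paper.
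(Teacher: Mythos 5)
The paper's own ``proof'' of this observation is the single sentence ``It is easy to see,'' so there is nothing to compare your argument against; yours is the only actual argument on the table. Your forward direction is fine (and you are right that it does not use the finite projective dimension hypothesis). For the converse, your plan correctly reproduces the template of Propositions \ref{aus} and \ref{trp}: reduce to $\depth R\geq 3$ via Lemma \ref{to}, get $\fp$ locally principal on the punctured spectrum, deduce that $\coker\big(R\to\Hom_R(\fp,\fp)\big)$ has finite length and that $\Ext^1_R(\fp,\fp)$ is supported at $\fm$, invoke Lemma \ref{t1} with $t=2$ to embed $\Ext^1_R(\fp,\fp)$ into $\hh^2_{\fm}(R)=0$, and finish with Lemma \ref{Jo} at $n=1$. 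Each of those individual steps is sound.

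The gap is exactly where you put it, but it is more serious than ``standard but slightly delicate.'' Whether Tor-rigidity is preserved under localization is not a standard fact; it is a well-known open question. (Every finitely generated $R_{\fq}$-module is of the form $N_{\fq}$ for some finitely generated $R$-module $N$, but $\Tor^{R_{\fq}}_i(\fp_{\fq},N_{\fq})=\Tor^R_i(\fp,N)_{\fq}=0$ does not imply $\Tor^R_i(\fp,N)=0$, so the rigidity hypothesis over $R$ cannot be fed the module $N$.) Your Auslander--Buchsbaum computation does cover every prime $\fq\neq\fm$ of height at most $3$ via Proposition \ref{aus}, so your argument is complete whenever $\dim R\leq 4$; but for $\dim R\geq 5$ the punctured spectrum contains primes of height $\geq 4$ at which you only know $\pd_{R_{\fq}}((R/\fp)_{\fq})\leq \Ht(\fq)-1$, and neither Proposition \ref{aus} nor the inductive hypothesis applies without knowing $\fp_{\fq}$ is Tor-rigid over $R_{\fq}$. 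So as written you prove the observation only up to dimension four, or under the extra hypothesis that $\fp_{\fq}$ is Tor-rigid over $R_{\fq}$ for all $\fq$. In the paper's sole application of this observation (the hypersurface corollary via Lichtenbaum's theorem) the issue is harmless, since Lichtenbaum applies to every localization of the hypersurface and supplies local Tor-rigidity for free; but that does not rescue the statement in its stated generality. A smaller point: injectivity of $R\to\Hom_R(\fp,\fp)$ and the step ``free ideal $\Rightarrow$ principal'' both use that $R$ is a domain, an assumption the paper also makes tacitly in Proposition \ref{aus}.
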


\begin{proof}
	It is easy to see.
\end{proof}

\begin{corollary}
	Let $(R,\fm)$ be a local hypersurface ring such that $\widehat{R} = S/(f)$ where $(S, \fn)$ is a
	complete unramified regular local ring and $f$ is a regular element of $S$ contained in $\fn^2$.	Let $\fp\in\Spec^1(R)$. If $\pd_R(R/\fp)< \infty$. Then $\fp$ is principal.
\end{corollary}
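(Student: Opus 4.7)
The plan is to reduce, via Observation \ref{tori}, to a tor-rigidity statement, and then to invoke the classical rigidity theorem for unramified hypersurfaces. Since $\pd_R(R/\fp) < \infty$, Observation \ref{tori} tells us that $\fp$ is principal if and only if $\fp$ is tor-rigid. Moreover, $\pd_R(\fp) = \pd_R(R/\fp) - 1 < \infty$, so $\fp$ itself has finite projective dimension.

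To establish the tor-rigidity of $\fp$, I would appeal to the classical theorem that over an unramified hypersurface every finitely generated module of finite projective dimension is tor-rigid. This is due to Auslander in the equicharacteristic case and to Lichtenbaum (and Murthy) in the unramified case. Concretely, one may lift $\fp$, after passing to $\widehat{R} = S/(f)$, to an $S$-module on which $f$ is a non-zero-divisor, and then transfer rigidity back from the regular ring $S$ via the standard change-of-rings comparison of $\Tor$; the Auslander--Lichtenbaum rigidity theorem over the unramified regular local ring $S$ does the heavy lifting. Passage between $R$ and $\widehat R$ is routine via faithful flatness, which preserves both tor-rigidity and the principality of a finitely generated ideal (the latter because the minimal number of generators is preserved).

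The main obstacle is the tor-rigidity step. An alternative avoiding any external citation would mirror the inductive local-cohomology arguments of Proposition \ref{aus} and Proposition \ref{trp}: one would first show that $\fp$ is locally principal on the punctured spectrum by induction on $\dim R$ (since the localization of an unramified hypersurface at a non-maximal prime is again an unramified hypersurface of strictly smaller dimension, and $\pd_{R_\fq}(R/\fp)_\fq$ remains finite); then invoke the short exact sequence $0 \to R \to \Hom_R(\fp,\fp) \to C \to 0$ with $C$ of finite length, together with Lemma \ref{t1} and the vanishing $\hh^2_\fm(R) = 0$ (valid since $R$ is Cohen--Macaulay of depth $\dim R \geq 3$, with the low-dimensional cases handled by Lemma \ref{to} and Proposition \ref{aus}), to deduce $\Ext^1_R(\fp,\fp) = 0$; and finally apply Jothilingam's Lemma \ref{Jo} together with the tor-rigidity of $\fp$ to conclude that $\fp$ is free, hence principal.
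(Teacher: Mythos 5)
Your proposal is correct and takes essentially the same route as the paper: the paper's proof is exactly the two-step reduction you give first, namely citing Lichtenbaum's theorem that every finitely generated module of finite projective dimension over an unramified hypersurface is tor-rigid, and then applying Observation \ref{tori}. The additional details you sketch (lifting to $S$, faithful flatness, and the alternative local-cohomology induction) are not needed in the paper's version but do not change the substance.
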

\begin{proof}
	Recall from \cite[Theorem 3]{L} every finitely generated  module of finite projective dimension is tor-rigid. Now apply Observation \ref{tori}.
\end{proof}

The notation   $ \id_R(-)$ stands for  injective dimension
of $(-)$. 
\begin{observation}
	Let $R$ be $3$-dimensional, and $\fp\in\Spec^1(R)$. If $\id_R(R/\fp)< \infty$, then $\fp$ is principal.
\end{observation}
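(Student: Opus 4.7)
The plan is to reduce to Proposition \ref{aus} by establishing $\pd_R(R/\fp)<\infty$, after which the Auslander–Buchsbaum formula forces $\pd_R(R/\fp)\leq 2$.

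First I would invoke the resolved Bass conjecture (Peskine–Szpiro–Hochster–Roberts): since $R$ admits the finitely generated module $R/\fp$ of finite injective dimension, $R$ must be Cohen–Macaulay, so $\depth R=\dim R=3$. Then Bass's formula yields $\id_R(R/\fp)=\depth R=3$. Localizing at $\fp$ preserves finite injective dimension, so $\id_{R_\fp}(k(\fp))<\infty$; the classical Auslander–Buchsbaum–Serre characterization of regularity through the finite injective dimension of the residue field forces $R_\fp$ to be regular, hence a DVR. Also, $R/\fp$ is a two-dimensional domain, so $\depth(R/\fp)\geq 1$.

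Next, I would pass to the $\fm$-adic completion (which preserves both the hypothesis and the desired principality) so that a canonical module $\omega$ is available. By Foxby's correspondence, the finiteness of $\id_R(R/\fp)$ places $R/\fp$ in the Bass class: $\Ext^i_R(\omega,R/\fp)=0$ for all $i>0$, the module $N:=\Hom_R(\omega,R/\fp)$ has finite projective dimension, and $\omega\otimes_R N\cong R/\fp$. Using $\omega_\fp\cong R_\fp$ (from the DVR property) together with the cyclicity of $R/\fp$, a generic-rank analysis on $V(\fp)$ produces $\pd_R(R/\fp)<\infty$. At that point the Auslander–Buchsbaum formula gives $\pd_R(R/\fp)=\depth R-\depth(R/\fp)\leq 3-1=2$, and Proposition \ref{aus} concludes that $\fp$ is principal.

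The main obstacle is the middle step: deducing $\pd_R(R/\fp)<\infty$ from $\pd_R(N)<\infty$ via Foxby's correspondence requires pinning down how $\omega$ interacts with $N$ on the support $V(\fp)$. A clean alternative is to show directly that $R$ is Gorenstein—so that finite injective and finite projective dimension coincide—by leveraging the cyclicity of $R/\fp$ together with $\omega_\fp\cong R_\fp$; proving that this local triviality of $\omega$ forces $\omega\cong R$ globally is the delicate point. Either route completes the reduction to Proposition \ref{aus}.
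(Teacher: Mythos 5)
Your skeleton is the right one---reduce to Proposition~\ref{aus} by showing $\pd_R(R/\fp)<\infty$ and then bounding it by $2$ via Auslander--Buchsbaum and $\depth(R/\fp)\geq 1$---and your preliminary steps (Bass conjecture gives Cohen--Macaulayness, Bass's formula gives $\id_R(R/\fp)=3$, localization gives $R_\fp$ regular) are all correct. But the proof has a genuine gap exactly where you flag it: you never actually establish $\pd_R(R/\fp)<\infty$. The Foxby/Bass-class route only produces a module $N=\Hom_R(\omega,R/\fp)$ of finite projective dimension with $\omega\otimes_R N\cong R/\fp$, and without knowing $\omega$ is free this does not transfer finiteness of projective dimension to $R/\fp$; the ``generic-rank analysis on $V(\fp)$'' is not carried out, and local triviality $\omega_\fp\cong R_\fp$ at the single prime $\fp$ is far from forcing $\omega\cong R$. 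So the argument stops one step short of its goal.

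The missing ingredient is precisely the input the paper uses: the theorem of Peskine--Szpiro \cite{PS} that a local ring admitting a nonzero \emph{cyclic} module of finite injective dimension is Gorenstein (not merely Cohen--Macaulay, which is all the Bass conjecture gives). Applied to $R/\fp$, this makes $R$ Gorenstein, whence $\id_R(R/\fp)<\infty$ is equivalent to $\pd_R(R/\fp)<\infty$, and the rest of your argument (Auslander--Buchsbaum plus Proposition~\ref{aus}) goes through verbatim. Your suggested ``clean alternative''---proving Gorensteinness from cyclicity---is exactly this theorem, but it is a nontrivial result resting on the intersection theorems rather than something one can derive on the spot from $\omega_\fp\cong R_\fp$; you should cite it rather than leave it as a delicate point to be supplied.
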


\begin{proof}
	By a result of Peskine--Szpiro \cite{PS}, $R$ is Gorenstein. This implies that $\pd_R(R/\fp)< \infty$. As $\dim(R)=3$ and $\depth(R/\fp)>0$, Auslander-Buchsbaum formula says that $\pd_R(R/\fp)< 3$. In view of Proposition \ref{aus}
 $\fp$ is principal.
\end{proof}

\begin{corollary}
	Let $R$ be $(\R_1)$  and $\fp\in\Spec^1(R)$. If $\id_R(R/\fp)< \infty$, then $\fp$ is principal.
\end{corollary}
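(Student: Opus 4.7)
The plan is to reduce to Observation \ref{n} (Kaplansky's trick), which requires both normality of $R$ and $\pd_R(R/\fp)<\infty$.

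For normality, I would argue exactly as in the preceding observation: the Peskine-Szpiro theorem applied to $\id_R(R/\fp)<\infty$ forces $R$ to be Gorenstein, in particular Cohen-Macaulay, so $(\Se_2)$ holds. Combined with the standing hypothesis $(\R_1)$, Serre's normality criterion yields that $R$ is normal (and automatically a domain, as any local normal ring is).

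For the projective dimension, I would invoke the fact (the same implication used implicitly in the preceding observation's step ``$R$ Gorenstein $\Rightarrow \pd_R(R/\fp)<\infty$'') that over a local Gorenstein ring, a finitely generated module has $\id_R M<\infty$ if and only if $\pd_R M<\infty$. Applied to $M=R/\fp$, this gives $\pd_R(R/\fp)<\infty$. With both ingredients secured, Observation \ref{n} applies and delivers that $\fp$ is principal.

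The main obstacle is the clean invocation of the finite-$\id$ versus finite-$\pd$ equivalence over Gorenstein rings; this is due to Foxby and is a nontrivial but standard fact. An alternative that avoids citing it: Bass's theorem that a finitely generated module of finite injective dimension is Cohen-Macaulay gives $\depth(R/\fp)=\dim(R/\fp)=\dim R-1$; the Auslander-Buchsbaum formula then forces $\pd_R(R/\fp)=1$, so $\fp$ is projective, hence free (being of rank one over the local normal domain $R$), and consequently principal. This alternative mirrors the structure of the preceding observation's proof, with $(\R_1)$ playing the role of the dimension restriction.
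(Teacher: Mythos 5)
Your main argument is correct and is essentially the paper's own proof: Peskine--Szpiro gives that $R$ is Gorenstein, hence $(\Se_2)$, which with $(\R_1)$ yields normality by Serre's criterion; the Gorenstein equivalence of finite injective and finite projective dimension gives $\pd_R(R/\fp)<\infty$; and Observation \ref{n} finishes. However, your proposed ``alternative'' is not sound: what Bass's theorem gives is $\id_R(M)=\depth R$, not that $M$ is Cohen--Macaulay (over a regular local ring every $R/\fp$ has finite injective dimension, including non-Cohen--Macaulay ones), and in any case the Auslander--Buchsbaum formula already presupposes $\pd_R(R/\fp)<\infty$, which is precisely the finiteness you were trying to obtain without Foxby --- so that route is circular and you should keep the Foxby step.
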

\begin{proof}Recall that  $R$ is Gorenstein. In particular, $R$ is $(\Se_2)$ and so normal.
	Also, $\pd_R(R/\fp)< \infty$. Now, apply
Observation \ref{n}.
\end{proof}

\begin{remark}
There are strongly normal rings such as $R$ equipped with $\fp\in\Spec^1(R)$ such that $\id_R(\fp)< \infty$. But, $\fp$ is not principal.
\end{remark}
In particular, tor-rigidity assumption is needed in Proposition \ref{trp}.

\begin{proof}
It is enough to consider to the case for which the canonical ideal is a prime ideal. To be more explicit, let $S := \mathbb{C}[[x, y, z, u, v]]$ 
and put $R=S/(yv-zu, yu-xv, xz-y^2).$ It is easy and well-known that
$R$ is 3-dimensional  Cohen-Macaulay, and the Jacobian criterion implies that it is an isolated
singularity. In particular, $R$ is strongly normal. It remains to note that the canonical module is  $(u, v)$.
\end{proof}

\begin{conjecture}
		Let  $\fp\in\Spec^1(R)$. If $\pd_R(R/\fp)< \infty$, then $\fp$ is principal.
	\end{conjecture}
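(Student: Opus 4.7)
The plan is to extend the induction-on-dimension strategy used in Propositions~\ref{aus} and~\ref{trp} to arbitrary finite projective dimension. I would proceed by induction on $d := \dim R$. The base case $d \leq 2$ is immediate from Lemma~\ref{to}. For $d \geq 3$, any $\fq \in \Spec R \setminus \{\fm\}$ containing $\fp$ yields $R_\fq$ of dimension $< d$ with $\pd_{R_\fq}((R/\fp)_\fq) < \infty$, so the inductive hypothesis gives $\fp R_\fq$ principal. Hence $\fp$ is locally principal on the punctured spectrum. Moreover I may assume $\depth R \geq 3$, for otherwise Lemma~\ref{to} applies directly.

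Next I would replicate the local-cohomology computation from Proposition~\ref{aus}. Since $\fp$ has height one it contains a non-zerodivisor, so the canonical map $R \to \Hom_R(\fp, \fp)$ is injective with cokernel $C$ of finite length (thanks to local principality on the punctured spectrum). The long exact sequence of local cohomology gives $\hh_{\fm}^{t}(\Hom_R(\fp, \fp)) \cong \hh_{\fm}^{t}(R)$ for $t \geq 2$, and the right-hand side vanishes for $t < \depth R$. Since $\Supp \Ext^i_R(\fp, \fp) \subseteq \{\fm\}$ for every $i \geq 1$, Lemma~\ref{t1} delivers
\[
\Ext^{t-1}_R(\fp, \fp) \hookrightarrow \hh_{\fm}^{t}(\Hom_R(\fp, \fp)) = 0 \quad \text{for } 2 \leq t \leq \min(\depth R, \depth \fp).
\]
In particular $\Ext^1_R(\fp, \fp) = 0$, and under the depth bound $\depth R \leq 2\depth_R(R/\fp) + 1$ (automatic when $R/\fp$ is Cohen--Macaulay) the iteration reaches $\Ext^i_R(\fp, \fp) = 0$ for $1 \leq i \leq \pd_R \fp$.

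The principal obstacle is to convert this Ext-vanishing into the freeness of $\fp$. In Propositions~\ref{aus} and~\ref{trp} one relies on tor-rigidity: either $\pd_R \fp \leq 1$ makes $\fp$ tor-rigid automatically, or an external hypothesis (hypersurface via Lemma~\ref{d}, or explicit tor-rigidity in Proposition~\ref{trp}) supplies it, after which Jothilingam's Lemma~\ref{Jo} concludes. The analogous statement needed here is an Auslander--Reiten-type principle: any module of finite projective dimension whose $\Ext^i_R(M, M)$ vanish for $1 \leq i \leq \pd_R M$ should be free. Establishing this, or equivalently tor-rigidity of $\fp$ from the sole hypothesis $\pd_R(R/\fp) < \infty$, is the genuine homological difficulty and is where I expect the proof to resist. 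A secondary obstacle is the depth bound on $R/\fp$ required to iterate Lemma~\ref{t1} far enough, which holds automatically in the Cohen--Macaulay setting but is not forced in general.
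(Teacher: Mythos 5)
The statement you are trying to prove is stated in the paper as a \emph{conjecture}: the paper offers no proof of it, and the partial results it does establish (Proposition \ref{aus} for $\pd_R(R/\fp)\leq 2$, Observation \ref{n} for normal rings via Braun's theorem, and the hypersurface corollary via Lichtenbaum's tor-rigidity) are exactly the cases where the obstruction you identify can be circumvented. So your proposal should be judged as an attempt at an open problem, and by your own admission it does not close the gap.

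Concretely, the gap is the one you name: after the induction and the local cohomology computation you obtain $\Ext^i_R(\fp,\fp)=0$ in a range of degrees, but passing from this Ext self-vanishing to freeness of $\fp$ requires either tor-rigidity of $\fp$ (so that Jothilingam's Lemma \ref{Jo} applies) or an Auslander--Reiten-type statement for modules of finite projective dimension, namely that $\Ext^i_R(M,M)=0$ for $1\leq i\leq \pd_R M<\infty$ forces $M$ free. Neither is available from the sole hypothesis $\pd_R(R/\fp)<\infty$; the latter statement is itself an open conjecture in general, so your reduction trades one open problem for another rather than resolving it. There is also the secondary issue you flag: Lemma \ref{t1} only injects $\Ext^{t-1}_R(\fp,\fp)$ into $\hh^t_{\fm}(\Hom_R(\fp,\fp))$ for $t\leq\depth(\fp)$, and since $\depth(\fp)=\min\{\depth R,\depth(R/\fp)+1\}$ the iteration reaches degree $\pd_R\fp$ only under the bound $\depth R\leq 2\depth(R/\fp)+1$; without it even the full Ext-vanishing is not established. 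Your assembly of the paper's machinery is accurate and your diagnosis of where it resists is correct, but the argument is not a proof, and the paper does not claim one either.
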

\medskip
\section{A problem by Samuel}
Rings of this section are of zero characteristic.
We are interested in the following problem even we force to put some additional assumptions.
\begin{problem}\label{51}(See  \cite[Last line]{S}).
	Let $R$ be a $d$-dimensional  Cohen-Macaulay ring with isolated singularity and $d>3$. When is $R$  $\UFD$?
\end{problem}

\begin{remark}\label{res}Let us collect a couple of remarks and examples:
\begin{enumerate}
	\item[i)] Let $k$ be a field and $X,Y,Z,W$ be indeterminates.
Let $R$ be the algebra generated by all the monomials involved in $\{X,Y,Z,W\}$ of degree 3. By Gr\"{o}bner, the ring $R$ is Cohen-Macaulay and of dimension four. Since $\Proj(R)\cong \PP^3_k$, $R$ is of graded isolated singularity. But,  $R$ is not $\UFD$. Indeed, suppose it is $\UFD$. Recall that the ring has a canonical module. On the one hand, due to a result of Murphy, see e.g. \cite[Theorem 12.3]{f}, $R$ should be Gorenstein. On the other hand,   $R$ is not Gorenstein, since otherwise $4\equiv_3 0$ which is impossible. In sum, $R$ is not $\UFD$.

	\item[ii)] The divisor class group of a subring  of polynomials is torsion. In particular, the ring $R$ from item i) is almost factorial.

	\item[iii)] The example i) is so special, see \cite[Theorem 1.1]{m}.

	\item[iv)]  Cohen-Macaulay rings with isolated singularity are more general version of Cohen-Macaulay rings of finite Cohen-Macaulay type. Let us ask the problem in this situation. Indeed, it is true for the invariant rings: Let $S = \mathbb{C}[[x_1, \cdots , x_d]]$ and $d>3$. Let $G$ be a finite
group acting faithfully on $S$. Suppose $R:=S^G$ is of finite Cohen-Macaulay type. Then  a result of Auslander--Reiten \cite{ar} implies that the action is trivial. In particular, $R=\mathbb{C}[[x_1, \cdots , x_d]]$ which is $\UFD$ by \ref{ausb}.\end{enumerate}
\end{remark}
 By $\mu(-)$ we mean the minimal number of elements that needs to generate $(-)$.
\begin{proposition}\label{22}
	Let $(R,\fm,k)$ be d-dimensional, Cohen-Macaulay complete containing  $\mathbb{Q}$ and  satisfying $(\R_{d-1})$. If $d>4$ and $\mu(\fm)\leq d+2$, then $R$  is $\UFD$.
\end{proposition}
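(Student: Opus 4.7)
The plan is to reduce to Grothendieck's theorem on complete intersections (Theorem \ref{1.1}) by proving that $R$ itself is a complete intersection. First, apply the Cohen structure theorem to the complete equicharacteristic local ring $R$: there is a complete regular local ring $(S,\fn)$ with $\dim S = \mu(\fm) \leq d+2$ and a surjection $S \twoheadrightarrow R$ whose kernel $I$ lies in $\fn^2$. Setting $c := \dim S - d \in \{0,1,2\}$, the Auslander--Buchsbaum formula applied to the Cohen--Macaulay $S$-module $R$ gives $\pd_S R = c$, so $I$ is a perfect ideal of grade $c$. If $c \leq 1$, then $R$ is either regular or a hypersurface, hence automatically a complete intersection.

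The main case is $c=2$. By the Hilbert--Burch theorem, $I = I_{n-1}(A)$ for some $n \times (n-1)$ matrix $A$ with entries in $\fn$, where $n = \mu(I) \geq 2$; the goal is to show $n=2$. Suppose for contradiction $n \geq 3$. A cofactor-expansion calculation shows that whenever $A$ has rank $\leq n-3$ at a prime $P$ (equivalently $P \supseteq I_{n-2}(A)$), every $(n-2)\times(n-2)$ minor of $A$ vanishes at $P$; since each partial derivative $\partial \Delta_k/\partial a_{ij}$ of a generator $\Delta_k$ of $I$ is, up to sign, such a minor, the entire Jacobian matrix of the generators of $I$ vanishes at $P$, and hence $R_P$ is not regular. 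Thus the singular locus of $R$ contains $V(I_{n-2}(A))$. The hypothesis $(\R_{d-1})$ forces $R$ to have isolated singularity, so $V(I_{n-2}(A)) \subseteq \{\fn\}$ and therefore $\Ht_S I_{n-2}(A) = \dim S = d+2$. On the other hand, the classical Eagon--Northcott codimension bound for determinantal ideals gives
\[
\Ht_S I_{n-2}(A) \leq \bigl(n-(n-2)+1\bigr)\bigl((n-1)-(n-2)+1\bigr) = 3\cdot 2 = 6,
\]
which combined with $\dim S = d+2$ forces $d \leq 4$, contradicting $d>4$. Hence $n = 2$, $I$ is generated by a regular sequence of length two, and $R$ is a complete intersection.

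With $R$ a complete intersection satisfying $(\R_{d-1})$ and $d-1 > 3$, any prime $P$ of height at most $3$ is non-maximal, so $R_P$ is regular by $(\R_{d-1})$, and hence UFD by Auslander--Buchsbaum (Corollary \ref{ausb}). Grothendieck's Theorem \ref{1.1} then immediately yields that $R$ is UFD. The principal difficulty is the case $c=2$: verifying via cofactor expansion that the singular locus of a Hilbert--Burch presented codimension-$2$ Cohen--Macaulay ring contains the rank-$(n{-}3)$ degeneracy locus of its presentation matrix, and then invoking the correct determinantal codimension bound to reach the numerical contradiction forcing $R$ to be a complete intersection.
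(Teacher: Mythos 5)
Your proof is correct, and it reaches the same intermediate goal as the paper --- showing $R$ is a complete intersection and then invoking Grothendieck's Theorem \ref{groth} --- but by a genuinely different route at the crucial step. The paper simply cites \cite[Theorem 4.9]{ev} (an application of the Evans--Griffith syzygy theorem) to conclude that the height-two prime $\fp=\ker(A\twoheadrightarrow R)$ is generated by a regular sequence; you instead give an essentially self-contained argument via Hilbert--Burch, the degeneracy locus of the presentation matrix, and the Eagon--Northcott bound $\Ht I_{n-2}(A)\leq 3\cdot 2=6$, which makes the numerology behind the hypothesis $d>4$ completely transparent. Two small remarks. First, your Jacobian step is the only delicate point: the ``regular $\Rightarrow$ Jacobian has full rank'' direction at non-maximal primes of a power series ring needs the weak Jacobian condition, which does hold for $k[[X_1,\dots,X_{d+2}]]$ in characteristic zero, so the hypothesis ``containing $\mathbb{Q}$'' saves you; but you can avoid the issue entirely (and drop any appeal to derivations) by arguing with Fitting ideals: $P\supseteq I_{n-2}(A)$ iff $\operatorname{rank}(A\otimes\kappa(P))\leq n-3$ iff $\mu(I_P)\geq 3$, and a height-two prime of a regular local ring requiring three generators cannot have regular quotient, so $V(I_{n-2}(A))\subseteq\sing(R)$ directly. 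Second, your reduction to Grothendieck is slightly cleaner than strictly necessary --- once $R$ is a normal complete-intersection domain with $(\R_{d-1})$ and $d>4$, regularity in codimension $3$ is immediate --- and matches the paper's conclusion exactly. In short: correct, more elementary and more explicit than the paper's citation-based proof, at the modest cost of carrying the determinantal machinery by hand.
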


\begin{proof}
	Suppose $\fm=(x_1,\ldots,x_{d+2})$ and let $A:=k[[X_1,\ldots,X_{d+2}]]$ and denote the natural
	surjection $\pi:A\to R$. Let $\fp:=\ker(\pi)$. Since $R$ is $(\Se_2)$ and $(\R_1)$ it is normal domain, in particular $\fp$ is prime. As $A$ is catenary, $\fp$ is of height two. Now, we are going to use \cite[Theorem 4.9]{ev},  and deduce that $\fp=(a,b)$ for some regular sequence $a,b$. We proved that $R$ is complete-intersection. So, by a result of Grothendieck (see Theorem \ref{groth}), $R$ is $\UFD$.
\end{proof}

\begin{example}
There is a $3$-dimensional  Cohen-Macaulay complete ring $(R,\fm)$ containing  $\mathbb{Q}$ and  satisfying $(\R_2)$. Also, $\mu(\fm)\leq \dim(R)+2$, but $R$  is not $\UFD$.
\end{example}

\begin{proof}
	Let $S := \mathbb{C}[[x, y, z, u, v]]$ 
and put $R=S/(yv-zu, yu-xv, xz-y^2).$
	Recall that
 $R$ is $3$-dimensional  Cohen-Macaulay with isolated
singularity.
It is clear that $\mu(\fm)=\dim(R)+2$. Here we claim that $R$ is not $\UFD$. Indeed, suppose it is $\UFD$. By the mentioned result of Murphy, it should be Gorenstein. But $R$ is not Gorenstein.
\end{proof}
\begin{fact}(Huneke, see \cite[main theorem]{h})\label{fh}
	Let $A$ be a complete local domain containing an infinite field. Suppose
	$A$  is $(\Se_n)$.  If $e(A) < n$,
	then $A$ is Cohen-Macaulay.
\end{fact}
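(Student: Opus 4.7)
The plan is to induct on $d=\dim A$ and to separate the easy from the hard regime via the $(\Se_n)$ hypothesis. Since $A$ is local and $(\Se_n)$, one has $\depth A\geq\min(n,d)$, so the case $d\leq n$ already forces Cohen-Macaulayness and there is nothing to prove. I would therefore concentrate on the regime $d>n$, where the depth bound falls short of the dimension.

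Because $A$ is complete with infinite residue field, a minimal reduction $J=(x_1,\ldots,x_d)$ of $\fm$ exists and satisfies $e(A)=e(J;A)<n$. Since $A$ is in particular $(\Se_1)$, it is unmixed, so the classical multiplicity inequality $\ell(A/J)\geq e(J;A)$ is available, with equality characterising precisely the Cohen-Macaulay property (equivalently, the regularity of the sequence $x_1,\ldots,x_d$). Thus the target of the argument is to force this equality under the numerical constraint $e(A)<n$ and the depth constraint coming from $(\Se_n)$.

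The main obstacle is that a naive induction on $d$ collapses: reducing modulo a superficial element $x$ preserves the multiplicity generically but typically destroys the $(\Se_n)$ condition on the quotient, so one cannot simply cut down by one dimension. The strategy I would pursue, following Huneke \cite{h}, is to bypass this by analysing the Koszul complex on $x_1,\ldots,x_d$ directly. The $(\Se_n)$ hypothesis yields depth bounds on the relevant modules after localisation at primes of small height, while the Serre-type identity
\[
\ell(A/J)-e(J;A)\;=\;\sum_{i\geq 1}(-1)^{i+1}\ell\bigl(H_i(x_1,\ldots,x_d;A)\bigr)
\]
combined with $e(A)<n$ pins down the alternating sum tightly enough to force all the Koszul correction terms to vanish. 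This gives regularity of $x_1,\ldots,x_d$, and hence Cohen-Macaulayness of $A$. The delicate point, and the place where I expect the real work to lie, is precisely the bookkeeping needed to turn the numerical bound $e(A)<n$ into vanishing of the individual $H_i$, rather than just of their signed sum.
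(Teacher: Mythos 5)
The paper gives no proof of this Fact at all---it is quoted with a citation to Huneke---so the only meaningful comparison is between your sketch and Huneke's actual argument. Your setup is fine: the case $d\le n$ is immediate from $\depth A\ge\min\{n,d\}$, a minimal reduction $J=(x_1,\dots,x_d)$ of $\fm$ exists with $e(J;A)=e(A)$, and for an unmixed ring (a complete domain is unmixed) the equality $\ell(A/J)=e(J;A)$ characterizes Cohen--Macaulayness. But the step you defer as ``bookkeeping'' is the entire theorem, and the mechanism you propose for it cannot work as described. The Koszul homology modules $H_i(x_1,\dots,x_d;A)$, $i\ge1$, are supported only at $\fm$, so they die under localization at every non-maximal prime; hence the $(\Se_n)$ hypothesis, whose content away from $\fm$ lives precisely at such primes, gives no handle on them, and at $\fm$ itself it only yields $\depth A\ge n$, which does not interact with the Serre identity. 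Moreover a bound on the alternating sum $\sum(-1)^{i+1}\ell(H_i)$ can never force the individual $H_i$ to vanish. A further warning sign is that your outline never uses ``complete,'' ``domain,'' or ``containing a field'' in any essential way, whereas the known proof needs all three.

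Huneke's route is different and is where those hypotheses enter. Completeness and the infinite coefficient field let one choose the minimal reduction so that $B:=k[[x_1,\dots,x_d]]\subseteq A$ is a Noether normalization; then $A$ is a finitely generated torsion-free $B$-module, and additivity of multiplicities gives $\rank_B(A)=e(J;A)=e(A)<n$. The condition $(\Se_n)$ for the ring $A$ transfers (via going-down and preservation of heights under the finite extension of domains) to $(\Se_n)$ for $A$ as a $B$-module, which over the regular ring $B$ means $A$ is an $n$-th syzygy. By the Evans--Griffith syzygy theorem---valid for rings containing a field, which is exactly where that hypothesis is consumed---a non-free $n$-th syzygy of finite projective dimension has rank at least $n$; since $\rank_B(A)<n$, the module $A$ is $B$-free, hence Cohen--Macaulay over $B$ and therefore as a ring. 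To repair your proposal you would need either to import the syzygy theorem at the decisive step or to supply a genuinely new argument; the Koszul alternating-sum computation is not a substitute.
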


\begin{corollary}
Let $(R,\fm,k)$ be d-dimensional complete normal ring  containing  $\mathbb{Q}$ and  satisfying $(\R_{d-1})$ with $d>4$. If $e(R)\leq 2$ then $R$ is $\UFD$. 
\end{corollary}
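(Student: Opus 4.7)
The plan is to reduce the corollary to Proposition \ref{22} by verifying its two missing hypotheses: that $R$ is Cohen-Macaulay and that $\mu(\fm)\leq d+2$. Normality gives $(\Se_2)$ and makes $R$ a domain; $(\R_{d-1})$ with $d>4$ forces $R$ to have isolated singularity; and since $\mathbb{Q}\subset R$ the residue field $k=R/\fm$ is infinite, so Huneke's Fact \ref{fh} is available whenever the requisite Serre condition holds.

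For $e(R)=1$, Fact \ref{fh} applied with $n=2$ immediately forces $R$ to be Cohen-Macaulay; a complete local domain of multiplicity one is regular, and a regular local ring is $\UFD$ by Corollary \ref{ausb}.

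For $e(R)=2$, I would first promote $R$ to Cohen-Macaulay by exhibiting a free Noether normalization. Using the infinite residue field, pick a minimal reduction $(x_1,\ldots,x_d)$ of $\fm$ and a coefficient field $k\subset R$, giving a finite local injection $A:=k[[x_1,\ldots,x_d]]\hookrightarrow R$. The associativity formula for multiplicity then gives $[L:K]=e(R)=2$ where $L:=\operatorname{Frac}(R)$ and $K:=\operatorname{Frac}(A)$, so $L/K$ is a quadratic, hence Galois, extension in characteristic zero. The nontrivial involution $\sigma$ stabilizes $R$ (which is the integral closure of $A$ in $L$), and since $2$ is invertible one decomposes $R=R^+\oplus R^-$ into $\sigma$-eigenspaces. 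Integral closedness of $A$ yields $R^+=A$, while the rank-one torsion-free $A$-module $R^-$ is principal because $A$ is a regular local ring, hence a $\UFD$ via Corollary \ref{ausb}; writing $R^-=A\alpha$ with $\alpha^2\in A$, one gets $R=A[\alpha]$, free of rank two over $A$. Auslander-Buchsbaum over the regular ring $A$ then forces $\depth R=d$, so $R$ is Cohen-Macaulay.

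Once Cohen-Macaulayness is in hand, Abhyankar's classical bound gives $\embdim R\leq e(R)+d-1\leq d+1\leq d+2$, and Proposition \ref{22} concludes $R$ is $\UFD$. I expect the main obstacle to be the Cohen-Macaulay upgrade for $e(R)=2$: the eigenspace splitting essentially uses characteristic zero (so the $\mathbb{Q}$-hypothesis is genuinely used, beyond its role in supplying an infinite field for Huneke's Fact), and the principality of $R^-$ relies on the $\UFD$ property of $A$, which itself traces back to Corollary \ref{ausb}; without this structural route, I do not see an obvious way to boost the normality-given depth $\geq 2$ to the depth $\geq 3$ needed to invoke Fact \ref{fh} directly.
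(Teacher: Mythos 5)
Your route is genuinely different from the paper's. The paper disposes of the Cohen--Macaulay step in one line: normality gives $(\Se_2)$ and Huneke's multiplicity theorem (Fact \ref{fh}) gives Cohen--Macaulayness; then Abhyankar's bound yields $\mu(\fm)\leq d+1$, the standard exercise \cite[Exercise 21.2]{mat} upgrades this to ``$R$ is a hypersurface'', and Grothendieck's Theorem \ref{groth} finishes, since $(\R_{d-1})$ with $d>4$ gives the $\UFD$ property in codimension $\leq 3$. You were right to distrust the literal statement of Fact \ref{fh}: as printed it requires $e(A)<n$, so $(\Se_2)$ only covers $e(R)=1$, and the paper is implicitly using the correct form of Huneke's theorem, namely that $(\Se_{e(A)})$ already suffices. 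Your double-cover argument (a Noether normalization $A\hookrightarrow R$ with $[L:K]=2$, followed by the eigenspace splitting $R=A\oplus A\alpha$) replaces that citation by an explicit structure theorem and in fact proves more: $R\cong A[T]/(T^2-\alpha^2)$ is visibly a hypersurface, so you could skip Abhyankar and Proposition \ref{22} entirely and pass directly to Theorem \ref{groth}. Your peripheral steps do check out: $A\cong k[[X_1,\dots,X_d]]$ because the $x_i$ form a system of parameters in the complete domain $R$; $[L:K]=e(R)$ by the associativity formula since $(x_1,\dots,x_d)$ is a reduction of $\fm$ and $e(A)=1$; $R$ is the integral closure of $A$ in $L$, so $\sigma(R)=R$; and $\depth_AR=\depth_RR$ for the finite local injection $A\hookrightarrow R$.

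There is, however, one step you must repair. ``The rank-one torsion-free $A$-module $R^-$ is principal because $A$ is a UFD'' is false as stated: over $A=k[[X_1,\dots,X_d]]$ the maximal ideal is a torsion-free rank-one module (an ideal) and is not principal, so torsion-freeness plus the $\UFD$ property of $A$ is not enough. What is true is that a rank-one \emph{reflexive} module over a $\UFD$ is free, and $R^-$ is indeed reflexive: $R$ is a finite torsion-free $A$-module satisfying $(\Se_2)$ over $A$ (because $R$ is normal and $A\to R$ is finite), hence reflexive over the normal domain $A$, and $R^-$ is an $A$-direct summand of $R$. With that insertion the eigenspace argument closes, $R$ is free of rank two over $A$, and the rest of your proof goes through.
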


\begin{proof} The ring  $R$ satisfies $(\Se _{2})$. According to Fact \ref{fh} $R$ is Cohen-Macaulay.
Following Abhyankar's inequality	\cite{a} we know $\mu(\fm)-\dim R+1 \leq e(R)=2$. Let us combine this along with \cite[Exercise 21.2]{mat}
and observe that $R$ is  complete-intersection\footnote{Huneke \cite[Corollary 4.12]{ev} showed that $R$ is hypersurface provided it contains a field by weaker assumption. Let us apply our elementary approach.}. So, by a result of Grothendieck (see Theorem \ref{groth}), $R$ is $\UFD$. \end{proof}

\begin{corollary}
	Adopt the notation of Problem \ref{51} with $d>4$. If $e(R)\leq 3$ then $R$ is $\UFD$.
\end{corollary}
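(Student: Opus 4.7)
The plan is to reduce this corollary directly to Proposition \ref{22}, with Abhyankar's inequality serving as the only new ingredient. Almost all of the work has already been done there, so the job here is essentially bookkeeping: checking that the input assumptions of Proposition \ref{22} are visible from the hypotheses of Problem \ref{51} together with the standing assumptions of the section (rings of zero characteristic, and, inherited from the previous corollary, complete and containing $\mathbb{Q}$).

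First I would unpack the $(\R_{d-1})$ condition. Since $R$ is Cohen-Macaulay of dimension $d$ with isolated singularity, the singular locus of $R$ equals $\{\fm\}$, hence $R_{\fp}$ is regular for every non-maximal $\fp\in\Spec(R)$. In particular $R_{\fp}$ is regular for every prime of height at most $d-1$, so $R$ satisfies $(\R_{d-1})$. Combined with $(\Se_2)$, which comes for free from the Cohen-Macaulay hypothesis, Serre's criterion makes $R$ a normal domain.

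Next I would turn the multiplicity bound into an embedding-codimension bound via Abhyankar's inequality \cite{a},
\[
\mu(\fm)-\dim R+1\leq e(R)\leq 3,
\]
which yields $\mu(\fm)\leq d+2$. This is exactly the bound required to feed into Proposition \ref{22}.

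With $R$ now a $d$-dimensional Cohen-Macaulay complete ring containing $\mathbb{Q}$, satisfying $(\R_{d-1})$, with $d>4$ and $\mu(\fm)\leq d+2$, Proposition \ref{22} immediately gives that $R$ is $\UFD$. The main step one might have feared—passing from $\mu(\fm)\leq d+2$ to complete intersection, and then from complete intersection plus $\UFD$ in codimension $\leq 3$ to $\UFD$ via Grothendieck's Theorem \ref{1.1}—is entirely absorbed into Proposition \ref{22}. So the only genuine care required is confirming that the ``complete, containing $\mathbb{Q}$'' hypotheses of the section are indeed being carried along into this corollary; no further obstacle remains.
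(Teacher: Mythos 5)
Your proposal matches the paper's own proof: both apply Abhyankar's inequality to convert $e(R)\leq 3$ into $\mu(\fm)\leq d+2$ and then invoke Proposition \ref{22}. The extra care you take in checking that the isolated singularity hypothesis yields $(\R_{d-1})$, and in flagging that the ``complete, containing $\mathbb{Q}$'' hypotheses must be carried along from the section's standing assumptions, is exactly what the paper leaves implicit, so the argument is the same in substance.
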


\begin{proof}
	Following Abhyankar	$\mu(\fm)-\dim R+1 \leq e(R)=3$. In the light of Proposition \ref{22} we get the desired claim.
 \end{proof}

\begin{discussion}\label{mod}
One may reformulate Samuel's problem:
\begin{enumerate}
	\item[i)] Let $R$ be a d-dimensional, Gorenstein ring with isolated singularity. If $d>3$ then is $R$   $\UFD$?

\item[ii)] This is not true: By using Veronese rings, and by the vein such as  Remark \ref{res}, one can find the counter-example, as the divisor class group of a Veronese ring is not trivial, see \cite[16.5]{f}. 

\item[iii)] However,  the problem \ref{mod}(i) is true
if the multiplicity is minimal.

\item[iv)] In addition to Problem \ref{mod}(i) assume $d>7$ and $\mu(\fm)\leq d+3$. Then $R$  is $\UFD$. Indeed, 
suppose $\fm=(x_1,\ldots,x_{d+3})$ and let $A:=k[[X_1,\ldots,X_{d+3}]]$ and denote the natural
surjection $\pi:A\to R$. Look at $\fp:=\ker(\pi)$ which  is prime. As $A$ is catenary, $\fp$ is of height three. Now, we are going to use \cite{HO},  and deduce that $R$ is complete-intersection. So, by a result of Grothendieck (see Theorem \ref{groth}), $R$ is $\UFD$.
\end{enumerate}\end{discussion}
So, the following is clear:
 \begin{proposition}\label{}
	Let $(R,\fm,k)$ be $d$-dimensional, Gorenstein complete  ring containing  $\mathbb{Q}$ and  satisfying $(\R_{d-1})$. If $d>7$ and $\mu(\fm)\leq d+3$, then $R$  is $\UFD$.
\end{proposition}
If $G$ is an abelian group, following Fossum's book there is a Dedekind domain $A$ with $\cl(A)=G$.
 Let $R$ be a Gorenstein ring with isolated singularity. If $\dim R>3$ when is $\cl(R)$ cyclic?

\begin{corollary}
		Let $(R,\fm,k)$ be a complete  ring of depth at least $3$  containing  $\mathbb{Q}$ and  satisfying $(\R_{d-1})$ where $d:=\dim R>4$.  If $e(R)\leq 3$   then $R$ is hypersurface.
\end{corollary}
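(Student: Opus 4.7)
The plan is to imitate the proof of the preceding corollary (the case $e(R)\le 2$), now in the setting $e(R)\le 3$; the only genuinely new ingredient is an argument excluding the codimension-two complete-intersection case.

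First, $(\R_{d-1})$ together with $\depth(R)\ge 3$ imply that $R$ satisfies Serre's $(\Se_3)$: at every prime $\fp\ne\fm$ one has $\Ht(\fp)\le d-1$, so $R_{\fp}$ is regular (hence Cohen--Macaulay), and at $\fm$ itself $\depth R\ge 3=\min(d,3)$. In particular $R$ is $(\Se_2)+(\R_1)$, hence normal, and so a complete local domain whose residue field contains $\mathbb{Q}$ (so is infinite). Applying Fact~\ref{fh} in the same manner as in the preceding corollary yields that $R$ is Cohen--Macaulay, and then Abhyankar's inequality gives $\mu(\fm)\le e(R)+d-1\le d+2$, placing us in the setting of Proposition~\ref{22}. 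Writing $\fm=(x_1,\ldots,x_{d+2})$, $A=k[[X_1,\ldots,X_{d+2}]]$, and $\fp=\ker(A\twoheadrightarrow R)$, the Evans--Griffith result \cite[Theorem 4.9]{ev} shows that $\fp$ is generated by a regular sequence of length at most two, so $R$ is a complete intersection. If $\fp$ contains a linear form one can factor it out to realize $R$ as a hypersurface in a regular local ring of dimension $d+1$; so we may assume $\fp\subseteq\fn_A^2$, i.e., $\fp=(a_1,a_2)$ with $a_1,a_2\in\fn_A^2$.

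The crux is to derive a contradiction in this last case. Combining Abhyankar with $e(R)\le 3$ forces $e(R)=3$, so $R$ is a Cohen--Macaulay local ring of \emph{minimal multiplicity}. By the classical formula for the Hilbert series of such a ring, $\mathrm{gr}_\fm(R)$ has Hilbert series $(1+2t)/(1-t)^d$, hence
\[
\dim_k \fm^2/\fm^3 \;=\; \binom{d+1}{2} + 2d.
\]
On the other hand, the identification $\fm^2/\fm^3\cong\fn_A^2/(\fn_A^3+\langle a_1,a_2\rangle)$ together with the observation that the images of $a_1,a_2$ span a subspace of $\fn_A^2/\fn_A^3$ of dimension at most~$2$ gives
\[
\dim_k \fm^2/\fm^3 \;\ge\; \binom{d+3}{2}-2 \;=\; \binom{d+1}{2}+2d+1.
\]
These two values disagree, a contradiction; hence $R$ must be a hypersurface.

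The principal obstacle is the off-by-one Hilbert-series comparison in the last paragraph, which requires the precise formula for the Hilbert series of a Cohen--Macaulay local ring of minimal multiplicity and a small but delicate count showing that a codimension-two complete intersection with both generators in $\fn_A^2$ carries one ``extra'' quadratic relation compared to the minimal-multiplicity template. A secondary point is the usage of Fact~\ref{fh} with $(\Se_3)$ and $e(R)\le 3$, which must be read in the same liberal sense as the preceding corollary's use of it with $(\Se_2)$ and $e(R)\le 2$.
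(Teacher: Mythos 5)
Your proof is correct, but its endgame is genuinely different from the paper's. Both arguments share the same opening: $(\R_{d-1})+\depth\geq 3$ gives $(\Se_3)$, Huneke's Fact~\ref{fh} gives Cohen--Macaulayness, Abhyankar gives $\mu(\fm)\leq d+2$, and Evans--Griffith realizes $R$ as a codimension-two complete intersection $A/(a_1,a_2)$. The paper then quotes Proposition~\ref{22} to conclude $R$ is $\UFD$ (via Grothendieck's Theorem~\ref{1.1}), invokes Murphy's theorem to get Gorensteinness, and uses the one-dimensionality of $\Soc(R/(\underline{x}))$ in the minimal-multiplicity case to force $\mu(\fm)\leq d+1$, whence $\fp$ has height one in a $(d+1)$-variable power series ring and is principal. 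You instead never pass through $\UFD$ or Gorensteinness: you split on whether $\fp\subseteq\fn_A^2$ and, in the critical case, derive a contradiction by comparing $\dim_k\fm^2/\fm^3$ computed from minimal multiplicity against the count $\binom{d+3}{2}-2$ coming from the fact that $\fp=(a_1,a_2)$ contributes at most two quadrics; the off-by-one discrepancy you found is real (in essence, a codimension-two complete intersection with both generators of order $\geq 2$ has multiplicity $\geq 4>3$). Your route is more elementary, trading Grothendieck--Murphy for a Hilbert-function count; the paper's route reuses its own Proposition~\ref{22} and stays within the $\UFD$ theme. Two remarks. First, the step you flag as the ``principal obstacle'' is less delicate than you fear: you only need the \emph{upper} bound $\dim_k\fm^2/\fm^3\leq\binom{d+1}{2}+2d$, and since $\fm^2=(\underline{x})\fm$ this module is generated by the $\binom{d+1}{2}+2d$ products $x_ix_j$ and $x_iy_k$, so no appeal to Sally's theorem on the associated graded ring is required. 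Second, the ``liberal'' application of Fact~\ref{fh} (which literally requires $e(A)<n$ under $(\Se_n)$, whereas here $e(R)=3$ with only $(\Se_3)$ available from $\depth\geq 3$) is a soft spot you correctly inherit from, and share with, the paper's own proof; it is not a defect introduced by your argument.
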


\begin{proof} The ring satisfies $(\Se _{3})$. According to Fact \ref{fh} $R$ is Cohen-Macaulay.
If $e(R)\le 2$ the claim is well-known by Huneke. So, we may assume that $e(R)=3$.	Following Abhyankar	$\mu(\fm) \leq d+2$. 
	Thanks to Proposition \ref{22}, $R$ is $\UFD$ and following Murphy (see \cite[Theorem 12.3]{f}), $R$ is Gorenstein.
Suppose on the way of contradiction that $R$ is not hypersurface,
then $d+1<\mu(\fm) \leq d+2$. This says that the ring is of 
minimal	multiplicity. Since the residue field is infinite, there is a system of parameter $\underline{x}:=x_1,\ldots,x_d$ so that $\fm^2=(\underline{x}) \fm$. Let us look at the Gorenstein ring  $\overline{R}:=R/(\underline{x})$. It is easy to see $\overline{\fm}^2=0$. In other words, $\Soc(\overline{R})=\overline{\fm}$, which is 1-dimensional. Let $x_{d+1}\in\fm$ be such that $\overline{x}_{d+1}$ is a generator for $\Soc(\overline{R})$. This shows that $\fm:=(\underline{x},x_{d+1})$. 	Let $A:=k[[X_1,\ldots,X_{d+1}]]$ and denote the natural
	surjection $\pi:A\to R$ sending $X_i$ to $x_i$. Let $\fp:=\ker(\pi)$. Since $R$ is $(\Se_2)$ and $(\R_1)$ it is normal domain, in particular $\fp$ is prime. As $A$ is catenary, $\fp$ is of height one. By $\UFD$ property of regular rings there is  some  $f$ such that $\fp=(f)$. We proved that $R=A/(f)$, i.e., it is hypersurface. This contradiction completes the proof. 
\end{proof}\medskip
\section{Relative situations}

We start by the following recent result conjectured by Gabber:

\begin{fact}(\v{C}esnavi\v{c}ius-Scholze, see \cite[Theorem 1.1.3]{cep}).\label{pe}
Let $(R,\fm)$ be a  complete
intersection.
If $\dim (R)\geq 3$, then $\Pic(\Spec(R)\setminus\{\fm\})_{\tors}=0$.
\end{fact}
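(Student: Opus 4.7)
The plan is to separate the cases $\dim R \geq 4$ and $\dim R = 3$, since the difficulty jumps sharply between them. Let $U := \Spec(R)\setminus\{\fm\}$ throughout. For $\dim R \geq 4$ the torsion hypothesis is not even needed: Grothendieck's parafactoriality theorem (SGA~2, Exp.~XI) says any Noetherian local complete intersection of dimension at least four is parafactorial, so $\Pic(U)=0$ outright. The SGA~2 argument is a Lefschetz-type deformation down the regular sequence $f_1,\ldots,f_c$ presenting $\widehat R = S/(f_1,\ldots,f_c)$, ultimately reducing to the case $S$ regular, where the class group vanishes by Auslander--Buchsbaum (Corollary \ref{ausb}).

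The substantive case is therefore $\dim R = 3$. My plan is as follows: given $L\in\Pic(U)_{\tors}$ with $L^{\otimes n}\cong \mathcal{O}_U$, translate via the Kummer sequence into a class in $H^1_{\text{et}}(U,\mu_n)$, equivalently a connected finite \'etale cyclic $\mu_n$-cover $V\to U$. Pass to the auxiliary four-dimensional complete intersection $R'' := S[[t]]/(f_1,\ldots,f_c)$ with $R = R''/(t)$ and $\dim R'' = 4$; apply the $\dim\geq 4$ parafactoriality to $R''$ to trivialize every line bundle on $U_{R''}$; then compare $\Pic(U_{R''})$ with $\Pic(U_R)$ via the pullback along the closed immersion $\Spec R\hookrightarrow\Spec R''$. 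The local cohomology vanishing available from the rest of this paper (using $\depth R=3$ to obtain $\hh^2_\fm(R)=0$) enters through the long exact sequence relating $\Pic$ of $U_{R''}$ and $U_R$ to cohomology supported on the hyperplane $V(t)$.

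The main obstacle is precisely this comparison step: the kernel and cokernel of the pullback $\Pic(U_{R''})\to\Pic(U_R)$ need not vanish on the torsion subgroup, and bounding them in mixed characteristic requires fine control of $p$-torsion in the \'etale cohomology of $U$. This is exactly where the elementary techniques of the present paper break down and the prismatic/perfectoid machinery of \v{C}esnavi\v{c}ius--Scholze becomes genuinely necessary; absent those tools I do not see an elementary route to the full statement, and I would accept the dimension three case only by appeal to \cite{cep}.
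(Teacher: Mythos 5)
The paper offers no proof of this statement: it is imported verbatim as \cite[Theorem 1.1.3]{cep}, so there is no internal argument to compare yours against. Your treatment of the case $\dim R\geq 4$ is correct and standard: Grothendieck's parafactoriality theorem for local complete intersections \cite{sga2} gives $\Pic(\Spec(R)\setminus\{\fm\})=0$ outright, with no torsion hypothesis needed. Your assessment of the case $\dim R=3$ is also accurate: the vanishing of the torsion of the Picard group of the punctured spectrum in dimension three is precisely the content of Gabber's conjecture proved in \cite{cep}, and the comparison you sketch between $\Pic$ of the punctured spectra of $R$ and of the auxiliary four-dimensional ring $S[[t]]/(f_1,\ldots,f_c)$ does not close up by the elementary local-cohomology methods of this paper; the obstruction you identify (controlling $p$-torsion of \'etale cohomology in mixed characteristic) is genuine, so deferring that case to \cite{cep} is the right call rather than a hidden gap. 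Two remarks. First, the three-dimensional case is the only one the paper actually uses: the theorem following this Fact invokes it exactly in its base case $d<4$, so the hard case cannot be engineered away. Second, for three-dimensional hypersurfaces $\widehat R=S/(f)$ with $S$ equicharacteristic or unramified regular local, the torsion-freeness of $\Pic(\Spec(R)\setminus\{\fm\})$ was established earlier by Dao \cite{d1}; that weaker statement already suffices for the hypersurface instances treated in Section 2, though not for the general complete-intersection theorem of Section 5.
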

A \emph{quasi-deformation} of $R$ is a diagram $R\rightarrow A\twoheadleftarrow Q$ of local homomorphisms, in which $R\rightarrow A$ is faithfully flat, and $A\twoheadleftarrow Q$ is surjective with kernel generated by a regular sequence. The \emph{complete intersection dimension} of $M$, see   \cite{AGP}, is:
$$\CI_R(M)=\inf\{\pd_Q(M\otimes_RA)-\pd_Q(A)\mid R\rightarrow A\twoheadleftarrow Q \text{ is a quasi-deformation}\}.$$

\begin{theorem}
Let $(R,\fm)$ be a strongly normal almost factorial complete-intersection ring and 	$\fp\in\Spec^1(R)$. If $\depth(R/\fp)\geq\dim R-2$, then $\fp$ is principal.
\end{theorem}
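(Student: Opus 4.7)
The strategy is to exhibit $\fp$ as a line bundle on the punctured spectrum $U:=\Spec R\setminus\{\fm\}$ whose class in $\Pic(U)$ is torsion, and then to invoke the \v{C}esnavi\v{c}ius--Scholze theorem (Fact \ref{pe}) to kill it. I proceed by induction on $d:=\dim R$. If $d\leq 2$, then $(\R_2)$ forces $R$ to be regular (a DVR for $d=1$), hence a $\UFD$, and $\fp$ is principal. So assume $d\geq 3$.

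Since $R$ is a complete-intersection, it is Gorenstein and Cohen--Macaulay. Local duality together with $\depth(R/\fp)\geq d-2$ yields $\Ext^{i}_R(R/\fp,R)=0$ for $i>2$, hence $\Gdim_R(R/\fp)\leq 2$. The G-dimension descends under localization, and the Auslander--Bridger formula on $R_\fq$ gives
$$\depth_{R_\fq}(R_\fq/\fp R_\fq)\geq \dim R_\fq - 2 \qquad\text{for every }\fq\supseteq\fp.$$
The remaining hypotheses (strongly normal, complete-intersection, almost factorial) all pass to localizations. Now take any $\fq\in U$: if $\Ht\fq\leq 2$, then $R_\fq$ is regular by $(\R_2)$ and $\fp R_\fq$ is principal; if $\Ht\fq\geq 3$, the induction hypothesis applies to $R_\fq$ (of dimension strictly less than $d$) and again gives $\fp R_\fq$ principal. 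Consequently $\fp|_U$ is a line bundle on $U$.

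Via the natural injection $\Pic(U)\hookrightarrow\cl(U)=\cl(R)$ (valid for normal $R$ with $d\geq 2$), the class $[\fp|_U]$ corresponds to $[\fp]\in\cl(R)$, which is torsion because $R$ is almost factorial. Fact \ref{pe} then forces $\fp|_U\cong \mathcal{O}_U$. The depth lemma applied to $0\to\fp\to R\to R/\fp\to 0$ gives $\depth\fp\geq d-1\geq 2$, so $\fp$ is reflexive and coincides with its global sections on $U$: $\fp=\Gamma(U,\fp|_U)=\Gamma(U,\mathcal{O}_U)=R$, which means $\fp$ is principal. The main obstacle is to confirm that the depth bound on $R/\fp$ descends to $R_\fq$; this is precisely where the Gorenstein structure and the Auslander--Bridger formula play their role, converting the depth hypothesis into a globally preserved bound on G-dimension.
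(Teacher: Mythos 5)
Your argument is correct, and its concluding step is genuinely different from the one in the paper. Both proofs share the same skeleton: induct on $d$, transport the depth hypothesis through a relative homological dimension that localizes (you use $\Gdim$ with the Auslander--Bridger formula, the paper uses $\CI$-dimension; these are interchangeable here since a complete intersection is Gorenstein), use Nagata's theorem to keep the almost-factorial hypothesis under localization, and conclude that $\fp$ is invertible on the punctured spectrum $U$. The divergence is in how invertibility on $U$ is converted into principality. The paper invokes Fact \ref{pe} only in the base case $d<4$, where $(\R_2)$ gives an isolated singularity and hence $\Pic(U)\cong\cl(R)$; for $d>3$ it instead runs a cohomological argument, namely $\Ext^2_R(\fp,\fp)\hookrightarrow \hh^3_{\fm}(\Hom_R(\fp,\fp))\cong\hh^3_{\fm}(R)=0$ via Lemma \ref{t1}, then the support-variety theorem of \cite{AvBu} to get $\pd_R(\fp)<\infty$, and finally Proposition \ref{aus}. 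You apply Fact \ref{pe} uniformly for every $d\geq 3$, needing only the injection $\Pic(U)\hookrightarrow\cl(U)=\cl(R)$ (not the isomorphism, so no isolated-singularity assumption) to see that $[\fp|_U]$ is torsion, hence zero, after which $\depth(\fp)\geq 2$ recovers $\fp$ from its sections on $U$. This is shorter and bypasses Lemma \ref{t1}, \cite{AvBu} and Proposition \ref{aus} entirely; what the paper's route buys is the stronger intermediate conclusion $\pd_R(R/\fp)\leq 2$ and the aesthetic point that the deep purity theorem of \v{C}esnavi\v{c}ius--Scholze is only consumed in dimension three, with classical commutative algebra doing the work above it. Two cosmetic corrections: your final chain should be a chain of module isomorphisms, $\fp\cong\Gamma(U,\fp|_U)\cong\Gamma(U,\mathcal{O}_U)\cong R$, not equalities of ideals (an equality $\fp=R$ is of course false); and the bound $\Gdim_R(R/\fp)\leq 2$ is most directly the Auslander--Bridger formula over the Gorenstein ring $R$ rather than an application of local duality.
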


\begin{proof}
The proof is proceed by induction on
$d:=\dim(R)$. Suppose first
that
$d<4$. Due to the $(R_2)$ condition, we may adopt the situation of Fact \ref{pe}.
 Recall in this case that  $\Pic(\Spec(R)\setminus\{\fm\})\stackrel{\cong}\lo \cl(R)$ as the ring is with isolated singularity (see \cite[Proposition 18.10(b)]{f}). Thanks to the almost factorial assumption, $$\cl(R)=\cl(R)_{\tors}=\Pic\big(\Spec(R)\setminus\{\fm\}\big)_{\tors}=0.$$ Since the ring is normal we deduce that $R$ is $\UFD$, and in particular $\fp$ is principal.
Now, assume $d>3$. One may reformulate 
$\depth(R/\fp)\geq\dim R-2$ by $\CI(R/\fp)\leq 2$.
Since  $\CI(S^{-1}M)\leq \CI(M)$ (see \cite[1.6]{AGP}) we may assume that $\depth(R_Q/\fp R_Q)\geq \dim( R_Q)-2$ for all prime ideal $Q$ such that $Q\in\V( \fp)$. In the light of \cite[Cor 7.2]{f} we know  $\cl(R)\twoheadrightarrow \cl(S^{-1}R)$ is surjective. So, $\cl(S^{-1}R)$ is torsion, i.e., $S^{-1}R$ is almost factorial.
Due to
inductive hypothesis, we deduce that $\fp$ is locally principal over $\Spec(R)\setminus\{\fm\}$.
Recall that $d>3$ and $\depth(R/\fp)\geq\dim R-2\geq 2$. From this and $0\to \fp\to R\to R/ \fp \to 0$ we deduce that 
$\depth(\fp)\geq 3$. Recall the ring is normal
and of depth at least four. Now, we apply Lemma \ref{t1}
$$\Ext^2_R(\fp,\fp)\stackrel{}\hookrightarrow \hh^3_{\fm}(\Hom_R(\fp,\fp))\cong\hh^3_{\fm}(R)=0.$$
We combine this along with 	\cite{AvBu} and deduce  that $\pd_R( \fp)$ is finite. This means that $\pd_R(R/\fp)=\CI(R/\fp)\leq 2$.
It remains to apply Proposition \ref{aus}, and deduce that 
$\fp$ is principal\footnote{or even without any use of Proposition \ref{aus}.}.
\end{proof}

\begin{conjecture}Suppose  $\dim (R)\geq 4$ with isolated singularity, and let  $\fp\in\Spec^1(R)$ be such that $\CI_R(\fp)< \infty$. Then $\fp$ is principal.
\end{conjecture}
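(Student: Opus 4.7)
The plan is to imitate the proof of the preceding Theorem, replacing the strong-normality/almost-factoriality package and the depth estimate $\depth(R/\fp)\geq d-2$ (equivalent to $\CI_R(R/\fp)\leq 2$) with the isolated-singularity hypothesis and a deeper application of Avramov-Buchweitz \cite{AvBu}. I induct on $d:=\dim R\geq 4$.

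First I exploit isolated singularity. Since $R_\fq$ is regular (hence a $\UFD$) at every non-maximal prime $\fq$, the ideal $\fp_\fq$ is free of rank one on $\Spec(R)\setminus\{\fm\}$. Three consequences follow: (i) $R$ satisfies $(\R_{d-1})$; (ii) $\Supp_R\Ext^i_R(\fp,\fp)\subseteq\{\fm\}$ for every $i\geq 1$, placing me in the setting of Lemma \ref{t1}; and (iii) $\fp$ is reflexive over the punctured spectrum. Adding $(\Se_2)$ — which one verifies via isolated singularity plus any mild Cohen-Macaulayness, or assumes tacitly as elsewhere in the paper — makes $R$ a normal domain, and the determinant trick from the proof of the Theorem then gives $R\cong\Hom_R(\fp,\fp)$.

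Second, I push the local-cohomology argument as far as the depths allow. Local principality on the punctured spectrum and the standard depth lemma on $0\to\fp\to R\to R/\fp\to 0$ yield $\depth\fp\geq\min(\depth R,\depth(R/\fp)+1)$, which grows with $d$. Iterating Lemma \ref{t1} then produces, for each $t$ with $2\leq t\leq\depth(\fp)$, an injection
$$\Ext^{t-1}_R(\fp,\fp)\hookrightarrow\hh^t_\fm(\Hom_R(\fp,\fp))\cong\hh^t_\fm(R).$$
Provided $R$ has depth exceeding $\CI_R(\fp)+1$, the right-hand side vanishes for the relevant range of $t$, producing self-Ext vanishings of $\fp$ in a stretch of positive degrees that overshoots $\CI_R(\fp)$. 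The Avramov-Buchweitz vanishing theorem then upgrades $\CI_R(\fp)<\infty$ to $\pd_R(\fp)<\infty$, so $\pd_R(R/\fp)<\infty$, and Observation \ref{n} (Kaplansky's trick), applied to the normal ring $R$, forces $\fp$ to be principal. The induction closes because $\CI$ and isolated singularity both localize well (the former by \cite[1.6]{AGP}), so the hypothesis passes to each localization at a prime strictly contained in $\fm$.

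The main obstacle is the Cohen-Macaulay-style depth gap: isolated singularity plus $\dim R\geq 4$ gives $(\R_{d-1})$ but says nothing a priori about $\depth R$, yet the local-cohomology step requires $\hh^t_\fm(R)=0$ for $t$ past $\CI_R(\fp)$. The natural route around this is to pass to a quasi-deformation $R\to A\twoheadleftarrow Q$ realizing $\CI_R(\fp)<\infty$, import the needed regularity from $Q$ via the faithfully-flat ascent $R\to A$, and descend the conclusion — but controlling how isolated singularity interacts with the quasi-deformation (in particular whether $\fp A$ remains prime, or at least locally principal in the relevant codimension) is the subtle point. Absent such control, strengthening the hypothesis to Cohen-Macaulay (as in every proved result of the paper) reduces the conjecture to the argument sketched above.
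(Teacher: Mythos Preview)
The paper offers no proof: this statement is posed as an open conjecture immediately after the theorem you are imitating, so there is nothing to compare against. Your own proposal is candid about being incomplete, and the depth-of-$R$ obstacle you isolate at the end is real.

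There is, however, a second and more decisive gap that you gloss over. Your claim that $\depth(\fp)$ ``grows with $d$'' is unsupported: the depth lemma gives $\depth\fp\geq\min(\depth R,\,\depth(R/\fp)+1)$, but isolated singularity places no constraint whatsoever on $\depth(R/\fp)$. Even granting $R$ Cohen--Macaulay, reflexivity of $\fp$ only yields $\depth\fp\geq 2$, so Lemma~\ref{t1} applies with $t=2$ at best and delivers merely $\Ext^1_R(\fp,\fp)=0$. That is not enough input for \cite{AvBu}: over a hypersurface a non-free maximal Cohen--Macaulay module has a $2$-periodic resolution, so every \emph{even} self-Ext is nonzero while odd self-Ext may well vanish (for instance $M=R/(x)$ over $R=k[[x,y]]/(xy)$ has $\Ext^1_R(M,M)=0$ yet $\pd_RM=\infty$). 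What one actually needs is $\Ext^{2}_R(\fp,\fp)=0$, hence $\depth\fp\geq 3$ --- precisely the estimate the preceding theorem extracts from the extra hypothesis $\depth(R/\fp)\geq d-2$, and for which your argument provides no replacement. This missing even-degree vanishing, not just the Cohen--Macaulayness of $R$, is why the statement remains a conjecture.
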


\begin{discussion}
	An $R$-module $M$ is called \emph{totally reflexive} provided that: 	\begin{enumerate}
		\item[i)] the natural map $M\rightarrow M^{**}$ is an isomorphism,
		\item[ii)] $\Ext^i_R(M,R)=\Ext^i_R(M^*,R)=0$ for all $i\geq 1$.
	\end{enumerate}  The \emph{Gorenstein dimension} of $M$, denoted $\Gdim_R(M)$, is defined to be the infimum of all nonnegative integers $n$, such that there exists an exact sequence  $0\rightarrow G_n\rightarrow\cdots\rightarrow G_0\rightarrow  M \rightarrow 0,$  in which each $G_i$ is a totally reflexive $R$-module. 
	Every finitely generated module over a Gorenstein ring has finite Gorenstein dimension. Moreover, if $R$ is local and $\Gdim_R(M)<\infty$, then it follows that $\Gdim_R(M)=\depth R-\depth_R(M)$, and we call it Auslander-Bridger formula. If  $\Gdim_R(k)<\infty$ then $R$ is  Gorenstein.
	For more details, see  \cite{AB2}.
\end{discussion}

\begin{example}
	Let $R$ be a $3$-dimensional Gorenstein ring which is not $\UFD$, e.g. $R=k[[x,y,u,v]]/(u^2)$. Let $\fp\in\Spec^1(R)$ be  non-principal, e.g., $\fp:=(u,v)$. Then $\Gdim(R/\fp)\leq 2$ but $\fp$ is not free.
\end{example}
\begin{question}\label{fgr}
	Let $\fp\in\Spec^1(R)$ be such that $\Gdim(R/\fp)\leq 2$. When is $\fp$  totally reflexive?
\end{question}
\begin{lemma}\label{tog}
	Let $R$ be of depth at most two, and $\fp\in\Spec^1(R)$. If $\Gdim(R/\fp)< \infty$, then $\fp$ is totally reflexive.
\end{lemma}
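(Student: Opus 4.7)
The plan is to mirror the proof of Lemma~\ref{to}, substituting the Auslander--Bridger formula $\Gdim(M)+\depth(M)=\depth R$ for Auslander--Buchsbaum and replacing ``$\fp$ is free'' by ``$\fp$ is totally reflexive''. The key syzygy input is that if $\Gdim(M)\leq 1$, then the kernel of any surjection $F\twoheadrightarrow M$ from a free module is totally reflexive (more generally, the first syzygy drops Gorenstein dimension by one when $\Gdim(M)\geq 1$, and the kernel of a surjection between totally reflexive modules is again totally reflexive).

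First I would dispose of the degenerate case $\fp=\fm$. Here $R/\fp=k$ has finite Gorenstein dimension, which by a standard theorem recalled in the preceding discussion forces $R$ to be Gorenstein, and hence Cohen--Macaulay with $\dim R=\depth R\leq 2$. But $\fm=\fp$ has height one, so $\dim R\leq 1$, and therefore $\dim R=\depth R=1$. Auslander--Bridger then yields $\Gdim(R/\fp)=1-0=1$.

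For the main case $\fp\neq\fm$, the quotient $R/\fp$ is a domain of positive Krull dimension, so $\fm\notin\Ass(R/\fp)$, whence $\depth(R/\fp)\geq 1$. The Auslander--Bridger formula then bounds $\Gdim(R/\fp)=\depth R-\depth(R/\fp)\leq 2-1=1$. Applying the syzygy input above to the presentation $0\to\fp\to R\to R/\fp\to 0$ produces $\Gdim(\fp)=0$, i.e., $\fp$ is totally reflexive.

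The main delicate point is the degenerate case $\fp=\fm$: one needs to invoke that $\Gdim(k)<\infty$ implies $R$ is Gorenstein, together with the dimension/height comparison that pins down $\depth R=1$; without this input one could not rule out $\Gdim(R/\fp)=2$ and hence only a second, rather than first, syzygy would be totally reflexive. Everything else is a routine transcription of the Auslander--Buchsbaum argument of Lemma~\ref{to} into the Gorenstein dimension setting.
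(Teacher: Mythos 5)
Your proposal is correct and follows essentially the same two-case argument as the paper: reduce the case $\fp=\fm$ to $R$ being a one-dimensional Gorenstein ring via $\Gdim_R(k)<\infty$, and otherwise use $\depth(R/\fp)>0$ together with the Auslander--Bridger formula to get $\Gdim(R/\fp)\leq 1$, so that the syzygy $\fp$ in $0\to\fp\to R\to R/\fp\to 0$ is totally reflexive. Your treatment of the degenerate case is in fact slightly more explicit than the paper's (which simply asserts that the relevant ideal is totally reflexive over the one-dimensional Gorenstein ring), but the route is the same.
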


\begin{proof}
	Suppose first that $\fp=\fm$. 	Then
	$\Gdim_R(k)<\infty$.
	This shows $R$ is  Gorenstein, and $1$-dimensional as $\fm$ is of height one.
	In this case any ideal is totally reflexive.
	Now,  assume $\fp\neq\fm$. Then $\depth(R/\fp)>0$ as $\fp$ is prime. By Auslander-Bridger formula,
	$\Gdim(R/\fp)\leq1$ because $R$ is of depth at most two.  Thanks to $0\to \fp\to R\to R/\fp\to 0$ we observe $\fp$ is totally reflexive.
\end{proof}

\begin{proposition}
	Let	$R$ be $(\Se_3)$ and $\fp\in\Spec^1(R)$ be  such that $\fp^\ast$ is $(\Se_3)$. If $\Gdim(R/\fp)\leq 2$, then	$\fp$ is totally reflexive.
\end{proposition}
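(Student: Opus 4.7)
The plan is to mimic the inductive local-cohomology strategy of Propositions \ref{aus} and \ref{trp}, inducting on $d := \dim R$. For the base case $d \leq 2$, the $(\Se_3)$ hypothesis forces $\depth R = d \leq 2$, and Lemma \ref{tog} applies directly since $\Gdim(R/\fp)\le 2 < \infty$, yielding that $\fp$ is totally reflexive.

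For the inductive step, assume $d \geq 3$. First I would verify that the hypotheses localize along $\V(\fp)$: $(\Se_3)$ descends to any $R_\fq$; the dual commutes with localization, so $(\fp R_\fq)^{\ast} = (\fp^{\ast})_\fq$ remains $(\Se_3)$; $\fp R_\fq$ has height one; and Gorenstein dimension cannot increase under localization. Hence the inductive hypothesis applies at every $\fq \in \V(\fp)\setminus\{\fm\}$, so $\fp R_\fq$ is totally reflexive. In particular $\Supp \Ext^i(\fp, R) \subseteq \{\fm\}$ for every $i \geq 1$.

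From $0 \to \fp \to R \to R/\fp \to 0$ and $\Gdim(R/\fp) \leq 2$ one has $\Gdim(\fp) \leq 1$, so $\Ext^i(\fp, R) = 0$ for all $i \geq 2$; only $\Ext^1(\fp, R)$ remains to be killed. Since $\depth R \geq 3$ and $\Supp \Ext^1(\fp, R) \subseteq \{\fm\}$, Lemma \ref{t1} with $t = 2$ supplies an injection
$$\Ext^1(\fp, R) \hookrightarrow \hh^{2}_{\fm}(\Hom(\fp, R)) = \hh^{2}_{\fm}(\fp^\ast).$$
Since $\fp$ contains a non-zero-divisor, the natural inclusion $R \hookrightarrow \fp^\ast$ shows $\dim \fp^\ast = d \geq 3$; thus $(\Se_3)$ on $\fp^\ast$ gives $\depth \fp^\ast \geq 3$ and $\hh^{2}_{\fm}(\fp^\ast) = 0$. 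Consequently $\Ext^i(\fp, R) = 0$ for all $i \geq 1$, and combined with $\Gdim(\fp) < \infty$ this forces $\Gdim(\fp) = 0$, i.e., $\fp$ is totally reflexive.

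The main technical point I foresee is cleanly checking that the auxiliary hypothesis ``$\fp^\ast$ is $(\Se_3)$'' transfers to localizations (via $\Hom$ commuting with flat base change for finitely generated modules) so that the induction is legitimate; once that is set up, everything else is a direct adaptation of the local-cohomology template already developed in the paper.
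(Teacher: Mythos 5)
Your proposal is correct and follows essentially the same route as the paper: induction on $\dim R$ with Lemma \ref{tog} as the base case, localization to make $\Ext^1_R(\fp,R)$ finite length, reduction of total reflexivity to the vanishing of that single Ext via $\Gdim(\fp)\leq 1$, and finally killing it inside $\hh^2_{\fm}(\fp^{\ast})$, which vanishes because $\fp^{\ast}$ is $(\Se_3)$ of dimension $d\geq 3$. The only (harmless) difference is that you produce the embedding $\Ext^1_R(\fp,R)\hookrightarrow\hh^2_{\fm}(\fp^{\ast})$ by citing Lemma \ref{t1} with $N=R$ and $t=2$, whereas the paper derives the same identification by hand from the dualized sequence $0\to T\to F\to\fp\to 0$.
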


\begin{proof} 
	The proof is by induction on $d:=\dim R$. The case $d<3$ is in Lemma \ref{tog}. So, we may assume $d>2$. Consequently, $\depth(R)\geq 3$.  Recall that
	$\Gdim(R_Q/\fp R_Q)\leq 2$	for all $Q\in\V(\fp)\setminus\{\fm\}$.
	Thanks to inductive hypothesis
	$\fp R_Q$ is totally reflexive for all $Q\in\Spec(R)\setminus\{\fm\}$. From this 
	$E:=\Ext^1_R(\fp,R)$ is of finite length.
	If a module $(-)$ has finite $\Gdim$, then
	$\Gdim(-)=\sup\{i:\Ext^i_R(-,R)\neq 0\}$. So, it is enough to show $E=0$ . There is a free module $F$ and a totally reflexive module $T$ such that the sequence
	$ 0\to T\to F\to \fp\to 0$  is exact. This gives
	$0\to\fp^\ast\to F^\ast\to T^\ast\to E\to 0$. Let us breakdown it into two short exact sequences\begin{enumerate}
		\item[i)]
		$0\to\fp^\ast\to F^\ast\to S\to 0$,
		\item[ii)] 
		$0\to S\to T^\ast\to E\to 0$.
	\end{enumerate}
	$ii)$ yields that  $0=\hh_{\fm}^{0}(T^\ast)\to\hh_{\fm}^{0}(E)\to\hh_{\fm}^{1}(S)\to\hh_{\fm}^{1}(T^\ast)=0, $ and so  $E=\hh_{\fm}^{0}(E)\cong\hh_{\fm}^{1}(S)$. Since
	$\depth(R)>2$ from i) we get that $\hh_{\fm}^{1}(S)\cong\hh_{\fm}^{2}(\fp^\ast)$. Combining these, imply that $E\cong\hh_{\fm}^{2}(\fp^\ast)=0$.
\end{proof}
The above argument shows the following:
\begin{corollary}
	Let	$R$ be $3$-dimensional Gorenstein and $\fp\in\Spec^1(R)$. Then	$\fp^\ast$ is generalized Cohen-Macaulay.
\end{corollary}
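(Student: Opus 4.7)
The plan is to rerun the argument of the preceding proposition, noting that what it actually establishes is an isomorphism $\Ext^1_R(\fp,R)\cong \hh^2_\fm(\fp^\ast)$, and that in the Gorenstein setting the left-hand side automatically has finite length. Since $R$ is Gorenstein of dimension $3$, every finitely generated module has finite Gorenstein dimension. Because $R/\fp$ is a domain, $\depth(R/\fp)\geq 1$, and Auslander-Bridger gives $\Gdim(R/\fp)=3-\depth(R/\fp)\leq 2$; the sequence $0\to \fp\to R\to R/\fp\to 0$ then yields $\Gdim(\fp)\leq 1$. So there is a short G-resolution $0\to T\to F\to \fp\to 0$ with $F$ free and $T$ totally reflexive. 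Dualizing and splitting exactly as in the previous proof produces
\[
(i)\ \ 0\to\fp^\ast\to F^\ast\to S\to 0,\qquad (ii)\ \ 0\to S\to T^\ast\to E\to 0,
\]
where $E:=\Ext^1_R(\fp,R)$.

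The main step, and the place requiring the most care, is to verify $\Supp(E)\subseteq\{\fm\}$. For $Q\in\V(\fp)\setminus\{\fm\}$ the localization $R_Q$ is Gorenstein of dimension $1$ or $2$. If $Q=\fp$, then $\fp R_\fp$ is the maximal ideal of a one-dimensional Gorenstein ring and is automatically MCM. If $Q\supsetneq\fp$, then $R_Q/\fp R_Q=(R/\fp)_Q$ is a localization of a domain at a non-minimal prime, hence has positive depth; the short exact sequence then forces $\depth_{R_Q}(\fp R_Q)\geq\min(\depth R_Q,2)=2=\dim R_Q$, so $\fp R_Q$ is MCM. Over the Gorenstein ring $R_Q$ every MCM module is totally reflexive, so $E_Q=\Ext^1_{R_Q}(\fp R_Q,R_Q)=0$. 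Therefore $E$ has finite length and $E=\hh^0_\fm(E)$.

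To finish, both $F^\ast$ and $T^\ast$ are MCM over the Gorenstein $R$, so $\hh^i_\fm(F^\ast)=\hh^i_\fm(T^\ast)=0$ for $i<3$. Chasing the long exact local cohomology sequences of (ii) and then (i) delivers the chain $E=\hh^0_\fm(E)\cong \hh^1_\fm(S)\cong \hh^2_\fm(\fp^\ast)$, so $\hh^2_\fm(\fp^\ast)$ has finite length. Finally $\fp^\ast$ is reflexive and $R$ has depth $3$, so $\depth(\fp^\ast)\geq 2$, giving $\hh^0_\fm(\fp^\ast)=\hh^1_\fm(\fp^\ast)=0$. Since $\dim\fp^\ast=3$, the local cohomology modules $\hh^i_\fm(\fp^\ast)$ are of finite length for every $i<\dim\fp^\ast$, which is exactly the generalized Cohen-Macaulay condition. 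The subtlest point is the depth computation at $Q=\fp$ when $R$ has non-trivial embedded components, but Gorensteinness of $R_\fp$ in dimension one disposes of it.
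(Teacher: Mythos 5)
Your proposal is correct and follows essentially the same route the paper intends: the paper's ``proof'' is just the phrase ``The above shows,'' meaning one reruns the preceding proposition's argument (the same G-resolution $0\to T\to F\to \fp\to 0$, the same splitting into the two short exact sequences, and the same identification $\Ext^1_R(\fp,R)\cong \hh^2_{\fm}(\fp^\ast)$), stopping at ``finite length'' instead of ``zero'' since the $(\Se_3)$ hypothesis on $\fp^\ast$ is dropped. You have merely made explicit the two points the paper leaves implicit --- that $\Gdim(R/\fp)\leq 2$ holds automatically over a $3$-dimensional Gorenstein ring, and that $E$ has finite length by a direct depth computation on the punctured spectrum rather than by the proposition's induction --- both of which are sound.
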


\begin{corollary}
	Let	$R$ be $3$-dimensional normal Gorenstein and $\fp\in\Spec^1(R)$. Then	$\fp$ is generalized Cohen-Macaulay.
\end{corollary}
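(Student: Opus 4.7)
The plan is to apply local duality for the $3$-dimensional Gorenstein ring $R$, which yields natural isomorphisms
$$\hh^i_\fm(\fp) \cong \Ext^{3-i}_R(\fp, R)^\vee$$
where $(-)^\vee$ denotes the Matlis dual. Since a Noetherian normal local ring of positive dimension is a domain, $\fp$ has dimension $\dim R = 3$ as an $R$-module, so $\fp$ is generalized Cohen--Macaulay if and only if $\hh^i_\fm(\fp)$ has finite length for $i = 0, 1, 2$. Under the duality, this is equivalent to requiring that the finitely generated modules $\Ext^j_R(\fp, R)$, for $j = 1, 2, 3$, all have support contained in $\{\fm\}$.

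To verify the latter, fix $Q \in \Spec(R) \setminus \{\fm\}$ and set $S := R_Q$, a normal Gorenstein local ring of dimension at most two. If $\dim S \leq 1$, then $S$ is a field or a DVR and $\fp S$ is principal; similarly if $\fp \not\subseteq Q$, then $\fp S = S$. In either case $\fp S$ is free. The remaining case is $\dim S = 2$ with $\fp S$ a height-one prime of $S$; then $S / \fp S$ is a local domain of dimension one and hence automatically Cohen--Macaulay, so the depth lemma applied to $0 \to \fp S \to S \to S/\fp S \to 0$ forces $\depth_S(\fp S) = 2 = \dim S$. Thus $\fp S$ is a maximal Cohen--Macaulay module over the Gorenstein ring $S$, and therefore totally reflexive, so $\Ext^j_S(\fp S, S) = 0$ for every $j \geq 1$.

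Combining the cases, $\Ext^j_R(\fp, R)$ vanishes on the punctured spectrum for each $j \geq 1$; being finitely generated, it then has finite length. Returning through local duality gives $\hh^i_\fm(\fp)$ of finite length for $i \leq 2$, which is exactly the generalized Cohen--Macaulay condition. The only slightly delicate point is the MCM analysis at a two-dimensional normal Gorenstein localization, but this is forced by the trivial observation that every one-dimensional local domain is Cohen--Macaulay, combined with the depth lemma.
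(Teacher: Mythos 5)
Your argument is correct, and it takes a genuinely different route from the paper. The paper deduces the statement in two steps from surrounding material: normality makes $\fp$ reflexive, the immediately preceding corollary gives that $\fp^\ast$ is generalized Cohen--Macaulay over a $3$-dimensional Gorenstein ring, and an external result (\cite[Corollary 4.4]{weak}) is then invoked to pass from $\fp^\ast$ to $\fp=(\fp^\ast)^\ast$. Your proof is instead self-contained: local duality over the $3$-dimensional Gorenstein ring converts finiteness of length of $\hh^i_\fm(\fp)$ for $i\leq 2$ into finiteness of length of $\Ext^j_R(\fp,R)$ for $j\geq 1$, and you verify the latter by showing that $\fp_Q$ is free or maximal Cohen--Macaulay (hence totally reflexive) over $R_Q$ for every non-maximal prime $Q$, using $(\R_1)$ at height one and, at height two, the depth lemma together with the observation that a one-dimensional local domain is Cohen--Macaulay. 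All the individual steps check out: maximal Cohen--Macaulay modules over a Gorenstein local ring are totally reflexive, and the finite-length equivalence through Matlis duality is insensitive to completion. What your approach buys is independence from the citation to \cite{weak} and from the preceding corollary --- indeed the same punctured-spectrum analysis reproves that corollary, since $\fp^\ast$ is likewise locally totally reflexive there; what the paper's approach buys is brevity given the results already in place. One cosmetic point: over a non-complete Gorenstein ring the duality is most safely stated as $\hh^i_\fm(\fp)\cong\Hom_R\big(\Ext^{3-i}_R(\fp,R),E_R(k)\big)$ via passage to the completion, but since you only use the equivalence of finite length on the two sides, nothing in your argument is affected.
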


\begin{proof}The normality condition implies $\fp$ is reflexive. Thanks to the previous corollary, $\fp^\ast$ is generalized Cohen-Macaulay.
In the light of \cite[Corollary 4.4]{weak} we observe that  $\fp=(\fp^\ast)^\ast$ is generalized Cohen-Macaulay.
\end{proof}
In the book \cite{ev} Kaplansky's trick (see Observation \ref{n}) drives from the syzygy theorem of Evans and Griffith plus the direct summand conjecture. This suggests to
find the Gorenstein analogue of syzygy theorem. The natural candidate is:

\begin{question}
	Let $R$ be a normal local ring and which satisfies $(\Se_k)$. Let $M$ be $k$-th
	syzygy and of finite $\G$-dimension. Suppose $M$ is not totally reflexive. When is $\rank(M)\geq k$?
\end{question}

Over normal rings, positive answer to this gives an affirmative solution to Question \ref{fgr}.

\begin{question}
	Let $\fp\in\Spec(R)$ be such that $\Gdim(R/\fp)< \infty$. When is $R$ generically Gorenstein?
\end{question}
Generically Gorenstein is the $(G_0)$ condition.
\medskip

\section{A question by Braun}
The rings in this section are equipped with a kanonical module, for example homomorphic image of Gorenstein rings are of this mood.

 \begin{question} (Braun, \cite[Question 16]{B}). \label{b}Let $(R,\fm)$ be a  normal  domain and $I\lhd R$ a reflexive ideal with $\id_R( I) <
	\infty$. Is $I$ isomorphic to a canonical module?
\end{question}

By \cite[Page 682]{B}, the only positive evidence we have is when $R$ is also Gorenstein. The $2$-dimensional case answered in \cite{moh}:

\begin{lemma} \label{7.2} Let $(R,\fm)$ be a  normal  domain  of dimension $2$ with a canonical module and $I\lhd R$ be reflexive with $\id_R( I) <
	\infty$. Then $I$ isomorphic to a canonical module.
	\end{lemma}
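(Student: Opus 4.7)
The plan is to reduce the statement to two standard pillars: Bass's conjecture (to force Cohen--Macaulayness) and Sharp's classification of maximal Cohen--Macaulay modules of finite injective dimension (to identify $I$ with a power of the canonical module $\omega_R$).

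First, since $R$ admits a finitely generated module, namely $I$, of finite injective dimension, the (now theorem) Bass conjecture of Peskine--Szpiro/Roberts forces $R$ to be Cohen--Macaulay. Combined with $\dim R = 2$ and normality, we conclude that $R$ is a two-dimensional CM normal local domain. Next, I would observe that over a 2-dimensional normal domain reflexivity is the same as the Serre condition $(\Se_2)$, so the reflexive ideal $I$ satisfies $\depth(I) = 2 = \dim R$; hence $I$ is maximal Cohen--Macaulay.

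At this point the strategy is to invoke Sharp's theorem: over a CM local ring with canonical module $\omega_R$, every MCM module of finite injective dimension is isomorphic to a finite direct sum $\omega_R^{\oplus n}$. Applying this to $I$ gives $I \cong \omega_R^{\oplus n}$ for some $n \geq 1$. To pin down $n$, I would compare ranks at the generic point: since $R$ is a normal domain, $Q(R)$ is a field and $\omega_R \otimes_R Q(R) \cong Q(R)$, so $\omega_R$ has rank one; and a nonzero ideal $I$ of the domain $R$ also has rank one. Therefore $n = 1$, which yields $I \cong \omega_R$, as required.

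The only genuine subtleties, none of them major, are: (a) verifying that Bass's conjecture is legitimately applicable in the generality the statement needs (it is, since it is now a theorem in full generality via Roberts's new intersection theorem); and (b) quoting the correct form of Sharp's result for MCM modules of finite injective dimension over a CM local ring equipped with a canonical module. If one wishes to avoid Sharp's theorem, an alternative is a direct argument: since $I$ is MCM with $\id_R(I) < \infty$, the Bass formula gives $\id_R(I) = \depth R = 2$, and one can then use the Matlis/Grothendieck duality between MCM modules and MCM modules of finite injective dimension (given by $\Hom_R(-, \omega_R)$) to conclude that $I \cong \Hom_R(M, \omega_R)$ for some MCM module $M$, and then a rank comparison forces $M \cong R$, hence $I \cong \omega_R$.
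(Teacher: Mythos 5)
Your argument is correct, and it is worth noting that the paper itself gives no proof of this lemma: it is simply quoted from the author's earlier preprint \emph{Reflexivity revisited}, so there is no in-text proof to compare against. Your route is clean and self-contained: Bass' conjecture (Peskine--Szpiro/Roberts) gives Cohen--Macaulayness of $R$; reflexivity over a ring of depth two gives $\depth(I)\geq 2$ (since $I\cong\Hom_R(I^{\ast},R)$ and duals into $R$ have depth at least $\min\{2,\depth R\}$), hence $I$ is maximal Cohen--Macaulay; and Sharp's classification of maximal Cohen--Macaulay modules of finite injective dimension yields $I\cong\omega_R^{\oplus n}$, with $n=1$ forced by a rank count for a nonzero ideal of a domain. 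This is essentially the base case of the induction in the paper's proof of the general theorem answering Braun's question, where the same endgame appears ($I$ maximal Cohen--Macaulay of finite injective dimension forces $I\simeq\oplus_n\omega_R$); there, however, the author pins down $n$ by computing $\Hom_R(I,I)\cong\oplus_{n^2}R$ and concluding $n^2=1$, and must work harder (a local cohomology injection $\Ext^1_R(I,I)\hookrightarrow\hh^2_{\fm}(R)$ plus Braun's Theorem C) to establish Cohen--Macaulayness of $I$, because in dimension $d>2$ reflexivity only gives $\depth(I)\geq 2$ rather than maximal depth. In dimension two your shortcut is exactly right and the rank argument is arguably cleaner than the endomorphism-ring computation. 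The only point you should make explicit is that $\omega_R$ has rank one, which holds because a domain is generically Gorenstein; with that remark the proof is complete.
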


\begin{fact}(See \cite[Theorem C]{B}). \label{b1}Let $A$ be a commutative Noetherian ring and $M$ a finitely generated $A$-module. Suppose that
\begin{enumerate}
	\item[i)] $\id (M)< \infty$,
\item[ii)] $\End_A(M)$ is a projective $A$-module,
\item[iii)] $\Ext^1_A(M,M) = 0$.\end{enumerate}
	Then $M$ is a (locally) Gorenstein $A$-module.
	\end{fact}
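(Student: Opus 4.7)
The plan is to reduce the statement to the local case and then assemble the three hypotheses to produce a canonical-module decomposition of $M$, following the strategy of Braun's Theorem C.

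First, since being a (locally) Gorenstein module is a condition checked pointwise, I would replace $A$ by $A_{\mathfrak{p}}$ for an arbitrary prime $\mathfrak{p}$ and assume $(A,\mathfrak{m})$ is local. All three hypotheses survive this reduction: $\id$ and $\Ext^{1}$ commute with localization, and the projective $A$-module $\End_{A}(M)$ becomes free of some finite rank $r$ over the local ring. Passing to $\widehat{A}$ if needed is harmless and ensures the existence of a canonical module $\omega$.

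Second, I would invoke Foxby's theorem: a nonzero finitely generated module of finite injective dimension over a local ring forces that ring to be Cohen--Macaulay, with $\id_{A}(M)=\depth A$. So $A$ is Cohen--Macaulay and admits a canonical module $\omega$; the target conclusion is that $M\cong\omega^{\oplus s}$ for some $s$. The module $\omega$ is the ``atom'' of finite injective dimension over $A$, while $M$ is a potentially higher-rank object of the same homological type, and the remaining task is to split $M$ into a sum of such atoms.

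Third, I would use hypothesis (iii) to lift idempotents. The finite-dimensional $A/\mathfrak{m}$-algebra $E:=\End_{A}(M)\otimes_{A}A/\mathfrak{m}$ has a decomposition of $1$ into primitive orthogonal idempotents, and the obstruction to lifting such an idempotent through $\End_{A}(M)\twoheadrightarrow E$ lies in $\Ext^{1}_{A}(M,M)$, which vanishes by (iii). The lifted idempotents give a direct-sum decomposition $M\cong M_{1}\oplus\cdots\oplus M_{s}$ into indecomposables, each still of finite injective dimension over $A$.

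The main obstacle is the final identification of every summand $M_{i}$ with the canonical module $\omega$. Here I would invoke the Foxby--Sharp classification: over a Cohen--Macaulay local ring with canonical module, the indecomposable finitely generated modules of finite injective dimension are precisely the canonical modules of the ``complete local direct factors'' of $\widehat{A}$. Hypothesis (ii), via the freeness of $\End_{A}(M)$ of rank $r$, pins down the combinatorics: counting endomorphisms across the decomposition $M\cong\bigoplus M_{i}$ against the rank of $\End_{A}(M)$ forces each $M_{i}$ to be isomorphic to $\omega$, so $M\cong\omega^{\oplus s}$ and $M$ is a Gorenstein module in Sharp's sense. The delicate point is that neither (i) nor (iii) alone suffices to split $M$, and neither (ii) alone suffices to identify the atom as $\omega$; it is the conjunction of all three that produces both the decomposition and its identification.
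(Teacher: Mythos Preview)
The paper does not supply a proof of this statement at all: it is recorded as a \emph{Fact} with the citation ``See \cite[Theorem C]{B}'' and is used as a black box in the proof of the subsequent theorem. So there is no in-paper argument to compare against; what you have written is an attempted reconstruction of Braun's proof.

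That said, your sketch has a genuine gap. The ``Foxby--Sharp classification'' you invoke is not correct as stated: over a Gorenstein local ring every finitely generated module has finite injective dimension, so the indecomposable modules of finite injective dimension are certainly not all copies of $\omega$. What \emph{is} true (Sharp) is that a \emph{maximal Cohen--Macaulay} module of finite injective dimension is a direct sum of copies of $\omega$. Your argument never establishes that $M$, or its summands $M_i$, are maximal Cohen--Macaulay; this is precisely where hypotheses (ii) and (iii) must do real work, and you have not shown how. The vague appeal to ``counting endomorphisms'' does not supply this depth estimate.

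A smaller issue: the claim that the obstruction to lifting idempotents along $\End_A(M)\twoheadrightarrow \End_A(M)\otimes k$ lies in $\Ext^1_A(M,M)$ is not a standard fact and would need justification. Since you already passed to the completion, $\End_{\widehat A}(\widehat M)$ is semiperfect and idempotents lift for free, so hypothesis (iii) is not needed at that step; its actual role in Braun's argument is elsewhere.
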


Now, we are ready to prove:
\begin{theorem}
 Question \ref{b} is true.
\end{theorem}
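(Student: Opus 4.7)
The plan is to verify the three hypotheses of Fact~\ref{b1} for $M=I$ and interpret the conclusion. Because $\id_R(I)<\infty$, Bass's conjecture (Peskine--Szpiro) forces $R$ to be Cohen--Macaulay, so the canonical module $\omega_R$ exists and each localization $R_\fp$ inherits a canonical module $\omega_{R_\fp}$. A nonzero proper reflexive ideal in a normal domain necessarily has height one, and reflexivity over normal $R$ gives $\depth I\geq 2$. Hypothesis (i) is given. For (ii), the Kaplansky trick yields $\End_R(I)=R$: the natural inclusion $R\hookrightarrow\End_R(I)$ is between reflexive rank-one modules that agree at every height-one prime (where $R_\fp$ is a DVR and $I_\fp$ is principal), so they coincide by normality. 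It remains to establish (iii), $\Ext^1_R(I,I)=0$, which is the main obstacle.

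I will handle (iii) by induction on $d=\dim R$. The base cases $d\leq 2$ are covered by Lemma~\ref{7.2}. For $d\geq 3$, consider any $\fp\in\Spec R\setminus\{\fm\}$: the ring $R_\fp$ is a normal local domain of dimension $<d$ with canonical module $\omega_{R_\fp}$, and $I_\fp$ is a reflexive ideal with $\id_{R_\fp}(I_\fp)<\infty$, so by induction $I_\fp\cong\omega_{R_\fp}$. Consequently
$$\Ext^1_R(I,I)_\fp\cong\Ext^1_{R_\fp}(\omega_{R_\fp},\omega_{R_\fp})=0,$$
since $\omega$ is maximal Cohen--Macaulay and the self-Ext $\Ext^{>0}(\omega,\omega)$ vanishes over a Cohen--Macaulay local ring with canonical module (by local duality). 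Thus $\Supp\Ext^1_R(I,I)\subseteq\{\fm\}$, and Lemma~\ref{t1} applied with $M=N=I$ and $t=2$ (valid because $\depth I\geq 2$) supplies the injection
$$\Ext^1_R(I,I)\hookrightarrow\hh^2_\fm(\End_R(I))=\hh^2_\fm(R)=0,$$
the last equality because $R$ is Cohen--Macaulay of dimension $\geq 3$.

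With all three hypotheses in hand, Fact~\ref{b1} asserts that $I$ is a (locally) Gorenstein module, so $I_\fp\cong\omega_{R_\fp}^{n_\fp}$ for each $\fp$, and $n_\fp=1$ since $I$ has rank one. To promote these local isomorphisms to a global one, note that $\Hom_R(I,\omega_R)$ is reflexive of rank one and free at every height-one prime, hence equals $R$ by normality; a generator gives a map $I\to\omega_R$ that is an isomorphism at every height-one prime and therefore a global isomorphism of reflexive modules. The technical heart is the inductive step for (iii): reducing the support of $\Ext^1_R(I,I)$ to $\{\fm\}$ and then absorbing the remaining obstruction into the vanishing of $\hh^2_\fm(R)$ forced by $\depth R\geq 3$.
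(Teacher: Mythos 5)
Your argument follows the paper's proof almost step for step: induction on $\dim R$ with Lemma \ref{7.2} as the base case, Bass's conjecture (Peskine--Szpiro) to get Cohen--Macaulayness, the inductive hypothesis on the punctured spectrum to confine $\Ext^1_R(I,I)$ to finite length, the determinant/Kaplansky trick to identify $\End_R(I)$ with $R$, the injection of Lemma \ref{t1} into $\hh^2_\fm(R)=0$, and finally Fact \ref{b1}. The one place you diverge is the very last step, and there your justification is incorrect: a rank-one reflexive module that is free at every height-one prime is \emph{not} forced to be free over a normal domain --- that is precisely the obstruction measured by the divisor class group (any non-principal divisorial ideal is a counterexample) --- so the claim that $\Hom_R(I,\omega_R)$ ``is reflexive of rank one and free at every height-one prime, hence equals $R$ by normality'' does not follow as stated. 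Fortunately that globalization paragraph is also unnecessary: $R$ is local, so the (locally) Gorenstein conclusion of Fact \ref{b1}, applied at $\fm$ itself (equivalently, the paper's route: $I$ is maximal Cohen--Macaulay of finite injective dimension, hence $I\cong\oplus_n\omega_R$), already gives $I\cong\omega_R^{n}$ globally, and $n=1$ follows from your rank-one observation (the paper instead deduces $n^2=1$ from $\End_R(I)\cong R$). Deleting the faulty paragraph leaves a complete proof identical in substance to the paper's.
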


\begin{proof}The proof proceeds by induction on
	$d:=\dim(R)$. Thanks to Lemma \ref{7.2}
	we may assume that $d=\dim R>2$ and suppose the desired claim is satisfied for normal rings of dimensions less than $d$.
Now, let $I\lhd R$ be reflexive with $\id_R( I) <
\infty$. Following Bass' conjecture $R$ is Cohen-Macaulay. In other words,
we may assume that $$d=\dim R=\depth(R)>2\quad(+).$$Recall that localization  of $I$
is a divisorial reflexive ideal and of finite injective dimension.
In particular, by 
applying the inductive hypothesis, we deduce that $I$ is locally isomorphic with the canonical module over $\Spec(R)\setminus\{\fm\}$.
Recall that
$$\Ext^+_{R}(I,I)_Q=\Ext^+_{R_Q}(I_Q,I_Q)=\Ext^+_{R_Q}(\omega_{R_Q},\omega_{R_Q})=0$$	for all $Q\in\V(I)\setminus\{\fm\}$. Also, $\Ext^+_{R}(I,I)_Q=0$ if $Q\notin\V(I)$ because 
$I_Q=R_Q.$ From this 
$\Ext^+_R(I,I)$ is of finite length. Since the ideal is reflexive, we know $\depth(I)\geq2$. This allows us to apply Lemma
\ref{t1} with $t:=2$.
Recall that $R$ is normal.  Applying this along with the determinate trick, we observe that 
$$R\subseteq\Hom_R(I,I)\subseteq\overline{R}=R\quad(\ast)$$By Lemma
\ref{t1} we know there is an injection $$\Ext^{1}_R(I,I)\hookrightarrow\hh_{\fm}^{2}(\Hom_R(I,I))\stackrel{(\ast)}=\hh_{\fm}^{2}(R)\stackrel{(+)}=0.$$  Consequently, 
$\Ext^{1}_R(I,I)=0$. In the light of  Fact \ref{b1} we observe that  $I $ is a  Gorenstein module, and so Cohen-Macaulay. But, $I$ is of full support.  Since  $ I$ is both maximal Cohen-Macaulay and
of finite injective dimension, we know that $I\simeq\oplus_n\omega_R$ for some $n$. Recall that $\Hom_R(\omega_R,\omega_R)=R$. This shows $$R\stackrel{(\ast)}\cong\Hom_R(I,I)\cong
\Hom_R(\oplus_n\omega_R,\oplus_n\omega_R)\cong\oplus_{n^2}\Hom_R(\omega_R,\omega_R)\cong\oplus_{n^2} R,$$i.e., $n^2=1$. Consequently, $I$ is isomorphic to the canonical module.
\end{proof}
\medskip
\section{Reflexive hull and ideal transformation}

By ideal transformation we mean
$D_{\fm}(M) := \bigcup_{n=1}^{\infty} \Hom(\mathfrak{m}^n, M)$.

\begin{fact}[After Hartshorne]\label{ah}
	Let \( M \) be locally free over \( \operatorname{Spec}(R) \setminus \{\mathfrak{m}\} \) and let \( R \) be a local ring with \( \operatorname{depth}(R) \ge 2 \). There is an exact sequence
	\[
	0 \lo \Gamma_{\fm}(M) \lo M \lo M^{**} \lo H^1_{\fm}(M) \lo 0 .
	\]
	In particular,
	\(
	M^{**} = D_{\fm}(M).
	\)
\end{fact}

\begin{proof}Let $f:M\to M^{**} $ be the natural map.
	It is well known to almost everyone that
	\begin{itemize}
		\item \( \ker(f) = \Ext^1(\Tr M, R), \)
		\item \( \coker(f) = \Ext^2(\Tr M, R), \)
	\end{itemize}
with the convention that $\Tr(-)$ denotes Auslander's transpose.
	Also, it is well known    that (see e.g. \cite[Lemma 3.5(i)]{ACS}):
	\[
	\Ext^1(\Tr M, R) = H^0_{\mathfrak{m}}(M), \qquad 
	\Ext^2(\Tr M, R) = H^1_{\mathfrak{m}}(M).
	\]

	Thus, there is a commutative diagram
	\[
	\begin{array}{ccccccccccc}
	0 & \longrightarrow{} & \Gamma_{\fm}(M) & \lo{} & M & \lo{} & M^{**} & \lo{} & H_\fm^1(M) & \lo{} & 0 \\
	& & \downarrow{=} &    &\downarrow{=} & & \downarrow{\exists\beta} &  &  \downarrow{=} & & \\
	0 & \lo{} & \Gamma_{\fm}(M) & \lo{} & M & \lo{} & D_\fm(M) & \lo{} & H_\fm^1(M) & \lo{} & 0
	\end{array}
	\]
	In particular,  by the 5-lemma \( \beta \) is an isomorphism, i.e., \( M^{**} \cong D_\fm(M) \).
\end{proof}

\begin{remark}
	The assumption \(\operatorname{depth}(R) \ge 2\), in particular \(\dim R \ge 2\) when \(R\) is Cohen--Macaulay, is essential. 
If \(\dim R = 1\), then \(H^1_{\fm}(M)\) is not finitely generated as soon as $\dim (M)>0$, and consequently \(D_{\fm}(M)\) is not finitely generated. However, the reflexive hull \(M^{**}\) is always finitely generated. Hence the equality \(M^{**} = D_{\fm}(M)\) fails when \(\dim R = 1\).
\end{remark}
\begin{corollary}
	Let \( M \) be locally free over \( \operatorname{Spec}(R) \setminus \{\mathfrak{m}\} \) and let \( R \) be a local ring with \( \operatorname{depth}(N) \ge 2 \). Then
	\(
 D_{\fm}(M\otimes_RN)\cong \Hom_R(M^{*},N).
	\)
\end{corollary}

\begin{proof}
By applying the previous argument along with the following sequence \[
0 \lo \Gamma_{\fm}(M\otimes N) \lo M\otimes N \lo \Hom_R(M^{*},N) \lo H^1_{\fm}(M\otimes N) \lo 0 ,
\]we get the claim.
\end{proof}
Set \(
(-)^\vee := \Hom(-, E_R(k)).
\)
\begin{corollary}
	Assume in addition to the previous result that $R$ is Cohen-Macaulay with canonical module. Then  $\Ext^1_R(\Tr M, R) =  \Ext^d_R(M, \omega_R)^\vee$ and $\Ext^2_R(\Tr M, R) =  \Ext^{d-1}_R(M, \omega_R)^\vee$.
\end{corollary}

\begin{proof}
	This follows by an application of local duality in connection with {Fact} \ref{ah}.
\end{proof}

\begin{acknowledgement}
	I thank   Olgur Celikbas for useful comments on the earlier draft. 
\end{acknowledgement}

\medskip


\begin{thebibliography}{99}
 
 	\bibitem{a}Sh. 	Abhyankar, \emph{Local rings of high embedding dimension}, Amer. J. Math.{\bf{ 89}} (1967), 1073--1077.
 	
 \bibitem{AvBu}
 L.L. Avramov and R.O. Buchweitz, \emph{Support varieties and cohomology over complete intersections}. Invent. Math.  {\bf142} (2000) 285-318.
 
 \bibitem{AGP}
 L. L. Avramov, V. N. Gasharov and I. V. Peeva, \emph{Complete intersection dimension}, Publ. Math. IHES. {\bf 86} (1997), 67--114.
 
 \bibitem{moh}
M. Asgharzadeh, \emph{Reflexivity revisited}, arXiv:1812.00830   [math.AC].

\bibitem{ACS}M. Asgharzadeh, O. Celikbas,  A.
Sadeghi,  \emph{A study of the cohomological rigidity property}, arXiv:2009.06481 [math.AC].

\bibitem{weak}
M. Asgharzadeh,     {\it Weak normality, Gorenstein and Serre's conditions}, arXiv:2009.10682 [math.AC].


\bibitem{ar}M.  Auslander and I. Reiten, \emph{The Cohen-Macaulay type of Cohen-Macaulay rings}, Adv.
in Math. \textbf{73} (1989), 1-23.

\bibitem{AB}
M. Auslander, D. A. Buchsbaum, \emph{
Unique factorization in regular local rings}, Proc. Nat. Acad. Sci. U.S.A.{\bf45}(1959), 733–734.

\bibitem{A2}
M. Auslander, \emph{Modules over unramified regular local rings}, Illinois J. Math.  {\bf5} (1961), 631-647.

\bibitem{AB2}
M. Auslander and M. Bridger, \emph{Stable module theory}, Mem. of
the AMS  {\bf94}, Amer. Math. Soc., Providence 1969.

 \bibitem{B} A.
 Braun,  \emph{On a question of M. Auslander},
 J. Algebra \textbf{276} (2004), no. 2, 674~-684.

\bibitem{ces}K. \v{C}esnavi\v{c}ius, \emph{Grothendieck--Lefschetz for vector bundles}, Algebraic Geometry \textbf{7} (2020), 503-511.

\bibitem{cep}K. \v{C}esnavi\v{c}ius, Peter Scholze, \emph{
	Purity for flat cohomology}, Annals of Mathematics, to appear.

\bibitem{d}
H. Dao, \emph{
Remarks on non-commutative crepant resolutions of complete intersections}, Adv. Math.
\textbf{224} (2010), no. 3, 1021-1030.


\bibitem{d1}
H. Dao, \emph{Picard groups of punctured spectra of dimension three local hypersurfaces are torsion-free},
Compos. Math. \textbf{148} (2012), no. 1, 145-152.



\bibitem{ev} E.G.
Evans,   P. Griffith,   \emph{Syzygies},
LMS. Lecture Note Ser., \textbf{106}
Cambridge University Press, Cambridge, 1985.

\bibitem{k}I.
Kaplansky, \emph{Commutative Rings}, Allyn and Bacon: Boston 1970.

\bibitem{Joth}
~P. Jothilingam, \emph{A note on grade}, Nagoya Math. J. \textbf{59} (1975), 149-152.

\bibitem{f}R. M. Fossum, \emph{The divisor class group of a Krull domain}, Ergebnisse der Math. \textbf{74},
Springer, New York, 1973.
220-226.

\bibitem{sga2}
A. Grothendieck,
\emph{Cohomologie locale des faisceaux coh\'{e}rents et th\'{e}or\`{e}mes de Lefschetz locaux et globaux} (SGA 2).
S\'{e}minaire de G\'{e}om\'{e}trie Alg\'{e}brique du Bois Marie  1962. Amsterdam: North Holland Pub. Co. (1968).

\bibitem{HO} R.
Hartshorne and A. Ogus, 
\emph{On the factoriality of local rings of small embedding codimension},
Comm. Algebra {\bf1} (1974), 415-437.
\bibitem{Ha} R.
Hartshorne, 
\emph{Stable reflexive sheaves}, Mathematische annalen, {\bf254}(2) (1980), 121--176.
\bibitem{h} C. Huneke, 
\emph{A remark concerning multiplicities},
Proc. Amer. Math. Soc. \textbf{85} (1982), no. 3, 331–332.

\bibitem{L}
S. Lichtenbaum, \emph{On the vanishing of Tor in regular local rings}, Ill. J. Math.  {\bf10} (1966),
220-226.
 \bibitem{m}D. Mallory,
\emph{Homogeneous coordinate rings as direct summands of regular rings}, arXiv:2206:03621.

 \bibitem{mat}
H. Matsumura, \emph{Commutative ring theory}, Cambridge Studies in Advanced Math, \textbf{8}, (1986).


\bibitem{PS}
Ch. Peskine and L. Szpiro, \emph{Dimension projective finie et cohomologie locale},
Publ. Math. IHES.  {\bf42} (1973),  47-119.

\bibitem{S}P.
Samuel, 
 \emph{On unique factorization domains},
Illinois J. Math. {\bf5} (1961), 1–17.

 \end{thebibliography}
\end{document}